\documentclass{amsart}
\usepackage[margin=1.1in]{geometry}
\usepackage{amsthm,amsmath,amssymb,enumitem,mathtools,mathrsfs,graphicx,url}
\usepackage{xcolor}
\usepackage[colorlinks=true,linkcolor=blue,citecolor=blue,urlcolor=blue,bookmarks=false]{hyperref}
\usepackage{tikz}
\usepackage{longtable}
\usetikzlibrary{arrows.meta}
\numberwithin{equation}{section}

\theoremstyle{plain}
\newtheorem{thm}{Theorem}[section]
\newtheorem{cor}[thm]{Corollary}
\newtheorem{prop}[thm]{Proposition}
\newtheorem{lem}[thm]{Lemma}
\newtheorem{claim}[thm]{Claim}
\theoremstyle{definition}

\newtheorem{example}[thm]{Example}

\theoremstyle{remark}
\newtheorem{rmk}[thm]{Remark}

\DeclareMathOperator{\codeg}{codeg}
\newcommand{\ZZ}{\mathbb Z}
\newcommand{\RR}{\mathbb R}
\newcommand{\ab}{\mathbf a}
\providecommand{\Fc}{}
\renewcommand{\Fc}{\mathcal F}
\providecommand{\Oc}{}
\renewcommand{\Oc}{\mathcal O}

\providecommand{\Dc}{}
\renewcommand{\Dc}{\mathcal D}
\newcommand{\oplu}{\oplus}

\DeclareMathOperator{\aff}{aff}

\tikzset{
  fvertex/.style={circle, draw, fill=white, inner sep=2pt},
  fedge/.style={-{Stealth[length=2mm]}, thick},
  flabel/.style={draw=none, fill=none, rectangle, inner sep=1pt,
                 font=\small\bfseries}
}

\title[A matroidal criterion for flow polytopes to be order polytopes]{A matroidal criterion for flow polytopes to be order polytopes}
\author{Akihiro Higashitani}
\address{Graduate School of Information Science and Technology, Osaka University, Suita, Osaka 565-0871, Japan}
\email{higashitani@ist.osaka-u.ac.jp}
\author{Hidefumi Ohsugi}
\address{Department of Mathematical Sciences, School of Science, Kwansei Gakuin University, Sanda, Hyogo 669-1330, Japan}
\email{ohsugi@kwansei.ac.jp}

\keywords{flow polytopes, order polytopes, $st$-planar graphs}
\subjclass[2020]{Primary 52B20; Secondary 05C10, 05B35, 06A07.}

\begin{document}

\begin{abstract}
Flow polytopes of directed acyclic graphs form a central class of lattice polytopes in algebraic, geometric, and enumerative combinatorics.
Order polytopes are one of the best understood families of lattice polytopes;
their Ehrhart theory, triangulations, volumes, and face structures are closely controlled by the combinatorics of the underlying posets.
M\'esz\'aros--Morales--Striker proved that the flow polytope of an $st$-planar directed acyclic graph is unimodularly equivalent to an order polytope.
In this paper, we prove a converse after contracting idle edges.
More precisely, for a directed acyclic graph $G$ with a unique source and a unique sink,
let $\widetilde G$ be the graph obtained from $G$ by successively contracting idle edges until none remain.
We prove that $\Fc(G)$ is unimodularly equivalent to an order polytope if and only if $\widetilde G$ is $st$-planar.
In addition, under a local three-good-neighbor condition, we prove that for a directed acyclic graph with a unique source, a unique sink, and no idle edges,
the graph is $st$-planar if and only if it avoids an explicit list of forbidden butterfly minors.
\end{abstract}

\maketitle

\section{Introduction}
Throughout this paper, we write a ``DAG" for a ``directed acyclic graph".
Flow polytopes encode nonnegative flows on directed graphs with prescribed netflow. They appear naturally in several parts of algebraic and enumerative combinatorics,
including the study of the Chan--Robbins--Yuen polytope~\cite{CRY}, Gelfand--Tsetlin polytopes~\cite{LMSD}, Kostant partition functions~\cite{BV}, root polytopes and faces of the alternating sign matrix polytope~\cite{MMS}.
Their normalized volumes, triangulations, lattice-point enumeration, and Ehrhart series often admit striking combinatorial interpretations.
Recent developments have also emphasized unimodular triangulations of flow polytopes~\cite{BraunCornejo}, volume inequalities for special classes of DAGs~\cite{BraunMcElroy}, and connections with Gorenstein polytopes and representation-theoretic structures~\cite{BraunCornejo, LMSD, vonBell}.

Order polytopes, introduced by Stanley \cite{Stanley}, are a foundational class of lattice polytopes attached to finite posets.
Their vertices are indexed by order filters, their normalized volumes are the numbers of linear extensions, and their canonical triangulations and Ehrhart series are well understood in terms of the underlying poset.
Therefore, when a flow polytope can be realized as an order polytope, one can transfer many questions about the former to the combinatorics of posets.
Recent work of Rietsch--Williams relates root polytopes, flow polytopes, and order polytopes to toric geometry, Hibi toric varieties, and mirror symmetry \cite{RW}.
M\'esz\'aros--Morales--Striker proved that flow polytopes of $st$-planar DAGs are unimodularly equivalent to order polytopes (\cite[Theorem 3.11]{MMS}).

The purpose of this paper is to prove a converse after contracting idle edges.
To avoid the trivial case, we assume that the flow polytope of a DAG $G$ is not $0$-dimensional throughout this paper.
Equivalently, $G$ is not a directed path.
We prove that a flow polytope is affinely equivalent to an order polytope only when the complete contraction of the graph is $st$-planar.
Here, an edge $uv$ is called \emph{idle} if it is the only outgoing edge of $u$, or the only incoming edge of $v$,
and the \emph{complete contraction} of $G$ is obtained by successively contracting idle edges until no idle edges remain.
Note that the complete contraction of $G$ is unique (Proposition~\ref{prop:idle}).
Since contracting idle edges preserves the flow polytope up to unimodular equivalence (Lemma~\ref{lem:new-butterfly}), complete contractions preserve flow polytopes up to unimodular equivalence.

We now state the first main theorem.

\begin{thm}\label{thm:main1}
Let $G$ be a DAG with a unique source $s$ and a unique sink $t$, and
let $\widetilde{G}$ be the complete contraction of $G$.
Then the following conditions are equivalent:
\begin{enumerate}[label={\rm (\roman*)}]
\item $\widetilde{G}$ is $st$-planar;
\item $\Fc(G)$ is unimodularly equivalent to an order polytope;
\item $\Fc(G)$ is affinely equivalent to an order polytope.
\end{enumerate}
\end{thm}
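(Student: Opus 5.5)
The cycle (i)$\Rightarrow$(ii)$\Rightarrow$(iii)$\Rightarrow$(i) is natural; (ii)$\Rightarrow$(iii) is trivial. For (i)$\Rightarrow$(ii), Lemma~\ref{lem:new-butterfly} gives that $\Fc(G)$ and $\Fc(\widetilde G)$ are unimodularly equivalent. Since $\widetilde G$ is $st$-planar, \cite[Theorem 3.11]{MMS} makes $\Fc(\widetilde G)$ unimodularly equivalent to an order polytope. The whole difficulty is therefore (iii)$\Rightarrow$(i). By the same lemma, we may replace $G$ by $\widetilde G$ and assume $G$ has no idle edges. We must then show that if $\Fc(G)$ is affinely equivalent to an order polytope $\Oc(P)$, then $G$ is $st$-planar.

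The plan for (iii)$\Rightarrow$(i) is to extract combinatorial invariants preserved by affine equivalence and compare them. Affine equivalence preserves the face lattice, so it preserves the facets, the vertices, and the vertex–facet incidences. Since $G$ has no idle edges, I expect the facets of $\Fc(G)$ to correspond bijectively to the edges of $G$, via the facets $\{x_e=0\}$. The facets of $\Oc(P)$ correspond to the cover relations of $\hat P = P\cup\{\hat0,\hat1\}$. The vertices of $\Fc(G)$ are the unit $s$–$t$ path flows, and the vertices of $\Oc(P)$ are the indicator vectors of filters.

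Affine equivalence also preserves affine dependencies among vertices. For order polytopes, two vertices $\chi_F,\chi_{F'}$ satisfy $\chi_F+\chi_{F'}=\chi_{F\cup F'}+\chi_{F\cap F'}$. Thus, for any two vertices, the midpoint of the segment joining them is the midpoint of a pair of vertices that is comparable, i.e.\ ordered by inclusion (distributive lattice structure). Equivalently, $\Oc(P)$ is ``compressed'' in a lattice-like way. On the flow side, the analogous relation for paths $p,q$ is that $p+q$ can be rewritten as another pair of paths by swapping segments at common vertices.

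The key step is a structural claim. If $G$ (without idle edges) is not $st$-planar, I would find two paths $p,q$, or a small configuration of paths, whose relations violate the property just described. Concretely, in an order polytope the vertex set carries a distributive lattice structure. Its covering relations are determined affinely: $\chi_{F'}-\chi_F$ is a unit vector along an edge of the polytope. Hence the edge directions of $\Oc(P)$ at each vertex are linearly independent in a controlled way, and the polytope's edge graph is the Hasse diagram of a distributive lattice (plus extra edges). I would transfer this to $\Fc(G)$ and obtain a total ``left-to-right'' order on $s$–$t$ paths compatible with crossings. I would then use the ordering of the out-edges at each vertex, read off from the lattice, to construct a planar embedding with $s$ and $t$ on the outer face. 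An alternative route goes through the paper's forbidden-minor results: show non-$st$-planar graphs without idle edges contain specific butterfly minors, and show that flow polytopes of graphs containing such a minor have a face not affinely equivalent to any order polytope face. This works because faces of order polytopes are order polytopes, and faces of flow polytopes correspond to subgraph deletions/contractions.

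The main obstacle is this last step. It requires a complete list of obstructions for $st$-planarity among DAGs without idle edges, together with a certificate for each obstruction, such as a face count or vertex–facet incidence pattern, that no order polytope realizes. I would first attempt the lattice-reconstruction approach, since it avoids a classification: the vertex–facet incidence matrix of an order polytope determines $\hat P$. I would check whether the edge–path incidence of $G$ forces the paths to form a planar ``ladder'' arrangement.
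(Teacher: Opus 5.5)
Your handling of (i)$\Rightarrow$(ii)$\Rightarrow$(iii), and the reduction to a graph without idle edges, matches the paper. However, (iii)$\Rightarrow$(i), which is the entire content of the theorem, is not proved: you outline two strategies and then name the decisive step as the main obstacle. Neither strategy works as stated.

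In the lattice approach, the claim that the distributive-lattice structure on vertices is ``determined affinely'' is false. If $P$ is a chain, $\Oc(P)$ is a simplex, and every permutation of its vertices is induced by an affine automorphism, so the order on filters cannot be recovered. In general, an affine map only transports the fact that two vertex pairs share a midpoint, not which of the pairs is the comparable one; ``unit vector'' is not an affine notion either. On the flow side, segment-swapping relations $p+q=p'+q'$ exist for every DAG, planar or not, so nothing in this structure separates $st$-planar graphs. The passage from a lattice on paths to an embedding with $s,t$ on the outer face is also never described.

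The forbidden-minor route needs a complete list of butterfly-minor obstructions for graphs without idle edges, and none is available. Theorem~\ref{thm:main2} obtains a finite list only under \ref{Hvertex}, and Remark~\ref{rmk:forbidden} indicates that without it further obstructions arise from every $\Dc_n$. Each certificate would also have to exclude \emph{affine} equivalence, so volume comparisons as in the appendix would not suffice.

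The missing idea is an affine invariant that detects planarity directly: the facet-normal matroid, i.e.\ the linear matroid of the facet normals in $L_P^*$.
\begin{itemize}
\item For $G$ without idle edges, the facets are $\{x_e=0\}$ and their normals are the coordinate functionals restricted to $\ker B_G$. Orthogonal duality then gives $M_{\mathrm{fac}}(\Fc(G))\cong M(G)^*$.
\item For $\Oc(P)$, the normals $\varepsilon_p$, $\varepsilon_q-\varepsilon_p$, $-\varepsilon_p$ are the columns of a reduced incidence matrix of the graph $K(P)$. This is the Hasse diagram of $\widehat P$ with $\widehat 0$ and $\widehat 1$ identified, so that matroid is graphic.
\end{itemize}
Applied to $G$ itself, this would only give planarity of $G$. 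To capture $st$-planarity, the paper passes to $G'=G\cup\{st\}$. Here $\Fc(G')$ is a pyramid over $\Fc(G)$ with apex the path $st$, and $\Oc(P\oplus\{q\})$ is a pyramid over $\Oc(P)$, so the affine equivalence extends to these pyramids. Then $M(G')^*$ is graphic, and Whitney's criterion forces $G'$ to be planar.
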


The implication (i) $\Rightarrow$ (ii) follows from the theorem appearing in M\'esz\'aros--Morales--Striker~\cite{MMS}.
Note that their terminology is slightly different from ours:
what they call a \emph{planar} DAG is precisely an $st$-planar DAG in the sense of this paper,
namely a DAG with a unique source $s$ and a unique sink $t$ such that $G\cup\{st\}$ is planar.
From \cite[Theorem 3.11]{MMS}, if $\widetilde{G}$ is $st$-planar, then $\Fc(\widetilde{G})$ is unimodularly equivalent to an order polytope.
Since $\Fc(G)$ and $\Fc(\widetilde{G})$ are unimodularly equivalent by Lemma \ref{lem:new-butterfly}, condition (ii) follows.
The implication (ii) \(\Rightarrow\) (iii) is immediate, since every unimodular equivalence is an affine equivalence.
Thus, the main content of this paper is the implication (iii) $\Rightarrow$ (i).
The key ingredient in the proof is a comparison between the facet-normal matroids of flow polytopes (Proposition~\ref{prop:flow-facet-matroid}) and those of order polytopes (Proposition~\ref{prop:order-facet-matroid}); see Section~\ref{sec:(iii)to(i)} for details.

\begin{example}
Let $\overrightarrow{K_n}$ denote the complete DAG on $\{1,2,\ldots,n\}$ whose edges are directed from smaller vertices to larger vertices.
M\'esz\'aros--Morales--Striker \cite[Remark~4.4]{MMS} asked whether $\mathcal F(\overrightarrow{K_7})$ is unimodularly equivalent to an order polytope.
Theorem~\ref{thm:main1} gives a negative answer to this question, not only for $\mathcal F(\overrightarrow{K_7})$ but also for $\mathcal F(\overrightarrow{K_n})$ for every $n \geq 7$.
In fact, for example, the complete contraction of $\overrightarrow{K_7}$ is $\Fc_5 \cup \{15\}$, where $\Fc_5$ appears in Section \ref{sec:onlyif}.
Since $\Fc_5 \cup \{15\}$ is not $st$-planar, Theorem~\ref{thm:main1} implies that $\mathcal F(\overrightarrow{K_7})$ is not affinely equivalent to any order polytope.
In particular, $\mathcal F(\overrightarrow{K_7})$ is not unimodularly equivalent to any order polytope.
\end{example}

Next, we study a forbidden butterfly minor criterion for $st$-planar graphs.
A butterfly minor is obtained by deleting directed edges and contracting idle edges.
The notions needed in the statement below are recalled in Section~\ref{sec:prelim}.
Let $G=(V,E)$ be a DAG with a unique source $s$ and a unique sink $t$.
We call a vertex in $V \setminus \{s,t\}$ \emph{internal}.
For an internal vertex $v$, let $N^+(v)$ and $N^-(v)$ be the out-neighbors and in-neighbors of $v$ inside $V \setminus\{s,t\}$, respectively.
\begin{itemize}
    \item A vertex $u\in N^+(v)$ is called {\it good} if there are at least two parallel edges $u\to t$ and an edge $s\to u$.
    \item A vertex $u\in N^-(v)$ is called {\it good} if there are at least two parallel edges $s\to u$ and an edge $u\to t$.
    \item We say that $G$ satisfies the \textit{three-good-neighbor condition} \ref{Hvertex} if

\medskip

\begin{enumerate}[label=(GN)]
\item\label{Hvertex}
every vertex $v\in V \setminus\{s,t\}$ with $|N^+(v)\cup N^-(v)|\ge3 $ has at least three good neighbors.
\end{enumerate}
\end{itemize}

\smallskip

Figure~\ref{fig:H2} shows the two representative distributions of three good neighbors. The other two distributions are obtained by reversing all arrows.

\begin{figure}[h]
\centering

\begin{tikzpicture}[ >=Latex, vertex/.style={ circle, draw, fill=white, inner sep=1.2pt, minimum size=17pt }, boundary/.style={ circle, draw, fill=gray!10, inner sep=1.2pt, minimum size=17pt }, edge/.style={
-{Stealth[length=1.5mm]},semithick}, every node/.style={ font=\small } ]
\node[boundary] (sA) at (0,0) {$s$};
\node[vertex] (vA) at (2.2,0) {$v$};
\node[boundary] (tA) at (6.5,0) {$t$};
\node[vertex] (a1) at (4.2,1.25) {$u_1^+$};
\node[vertex] (a2) at (4.2,0) {$u_2^+$};
\node[vertex] (a3) at (4.2,-1.25) {$u_3^+$};
\draw[edge] (vA) -- (a1);
\draw[edge] (vA) -- (a2);
\draw[edge] (vA) -- (a3);
\draw[edge] (sA) to[bend left=20] (a1);
\draw[edge] (sA) to[bend left=8] (a2);
\draw[edge] (sA) to[bend right=20] (a3);
\draw[edge] (a1) to[bend left=8] (tA);
\draw[edge] (a1) to[bend right=8] (tA);
\draw[edge] (a2) to[bend left=8] (tA);
\draw[edge] (a2) to[bend right=8] (tA);
\draw[edge] (a3) to[bend left=8] (tA);
\draw[edge] (a3) to[bend right=8] (tA);
\node at (3.25,-2.05) {(a) Three good out-neighbors};
\begin{scope}[xshift=8.5cm]
\node[boundary] (sB) at (0,0) {$s$};
\node[vertex] (vB) at (3.3,0) {$v$};
\node[boundary] (tB) at (6.5,0) {$t$};
\node[vertex] (b1) at (1.8,1.25) {$u_1^-$};
\node[vertex] (b2) at (1.8,-1.25) {$u_2^-$};
\node[vertex] (c1) at (4.8,0) {$u_3^+$};
\draw[edge] (sB) to[bend left=8] (b1);
\draw[edge] (sB) to[bend right=8] (b1);
\draw[edge] (sB) to[bend left=8] (b2);
\draw[edge] (sB) to[bend right=8] (b2);
\draw[edge] (b1) -- (vB);
\draw[edge] (b2) -- (vB);
\draw[edge] (b1) to[bend left=18] (tB);
\draw[edge] (b2) to[bend right=18] (tB);
\draw[edge] (sB) to[bend left=12] (c1);
\draw[edge] (vB) -- (c1);
\draw[edge] (c1) to[bend left=9] (tB);
\draw[edge] (c1) to[bend right=9] (tB);
\node at (3.25,-2.05) {(b) Two good in-neighbors and one good out-neighbor};
\end{scope}
\end{tikzpicture} \caption{Representative configurations in condition \ref{Hvertex}.
The configurations obtained by reversing all arrows are omitted.} \label{fig:H2}

\end{figure}

We now state the second main theorem.

\begin{thm}\label{thm:main2}
Let $G$ be a DAG with a unique source $s$ and a unique sink $t$, and assume that $G$ has no idle edges.
Suppose that $G$ satisfies  the three-good-neighbor condition \ref{Hvertex}.  Then $G$ is $st$-planar
if and only if $G$ contains neither a DAG in
\[\{\Fc_5,\Fc_{33}^{(1a)},\Fc_{33}^{(1b)},\Fc_{33}^{(1c)},\Fc_{33}^{(3a)},\Fc_{33}^{(4)},\Fc_{33}^{(5a)},\Fc_{33}^{(5b)}\} \cup \{\Dc_n\}_{n \geq 2}
\]
nor the reversal of such a DAG as a butterfly minor.
\end{thm}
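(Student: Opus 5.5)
The plan is to prove the two directions separately, with almost all the work going into sufficiency.

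\emph{Necessity.} Assume $G$ is $st$-planar. We check that $st$-planarity is closed under butterfly minors. Deleting an edge keeps an embedding of $G\cup\{st\}$ planar. If deletion creates a new source or sink, we pass to the relevant subgraph, or note that the forbidden graphs all have a unique source and sink, so only minors of that shape matter. Contracting an idle edge $uv$ is a planar operation on the underlying graph, and it keeps the graph acyclic: since $uv$ is the only edge out of $u$ (or into $v$), no other $u$--$v$ path exists, so no directed cycle appears. It also keeps $s$ and $t$ unique. Thus every butterfly minor of $G$ is again $st$-planar. It remains to verify that each listed graph and its reversal is not $st$-planar. Reversal preserves $st$-planarity, so only the listed graphs need checking. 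For the finite list $\Fc_5,\Fc_{33}^{(\cdot)}$ we add the edge $st$ and exhibit a $K_5$ or $K_{3,3}$ subdivision, which the names suggest. For the infinite family $\Dc_n$, where the obstruction is presumably not Kuratowski in the underlying graph, we argue directly: in any planar embedding of $G\cup\{st\}$, the edge $st$ must lie on the outer face, and the cyclic orders of edges at a vertex must split into consecutive incoming and outgoing blocks (the bimodal property of upward planar $st$-graphs). The structure of $\Dc_n$ then forces a parity contradiction around its cycle.

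\emph{Sufficiency.} Suppose $G$ has no idle edges, satisfies \ref{Hvertex}, and is not $st$-planar. Let $H=G\cup\{st\}$.
\begin{enumerate}
\item \emph{Non-planar case.} If $H$ is non-planar, it contains a $K_5$ or $K_{3,3}$ subdivision. We use the absence of idle edges and the good neighbors from \ref{Hvertex}, which supply parallel edges to $s$ and $t$, to reroute the subdivision paths into directed paths. These can then be contracted as idle edges after deleting everything else, producing one of $\Fc_5$, the $\Fc_{33}^{(\cdot)}$, or their reversals. The case split follows how $s$ and $t$ sit relative to the branch vertices of the Kuratowski subgraph: as branch vertices, on subdivided edges, or outside it. This is where the superscripts (1a), (3a), \dots{} come from.
\item \emph{Planar case.} If $H$ is planar but no embedding is upward compatible, then a classical result says that a planar $st$-digraph with the edge $st$ is $st$-planar exactly when some embedding is bimodal. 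This is automatic for a planar graph with a single source and sink once $st$ is drawn on the outer face, so this case should reduce to (1). Alternatively, the failure is witnessed by a vertex whose in- and out-edges interleave cyclically in every embedding. In that situation we expect to extract a $\Dc_n$ minor from a chain of faces around that vertex.
\end{enumerate}

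The main obstacle is the rerouting in step (1). A Kuratowski subdivision in the undirected graph $H$ need not consist of directed paths, and contracting an edge is only allowed when the edge is idle. We will first try an extremal choice: among all Kuratowski subgraphs, take one minimizing the number of non-internal vertices. We then use \ref{Hvertex} at each branch vertex of degree at least $3$. Three good neighbors, each carrying a doubled edge to $t$ or from $s$, let every branch vertex be connected to $s$ and $t$ through directed paths, which makes the contraction to the listed shapes possible. The bulk of the proof will be an exhaustive but finite case analysis of the relative positions of $s$ and $t$.
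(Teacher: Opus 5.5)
Your necessity direction has the same shape as the paper's (closure of $st$-planarity under butterfly minors, plus each listed graph is not $st$-planar). Your final move in (1), deleting everything else and contracting directed paths that have become idle, is also the right tool. But there is a genuine gap in sufficiency, and it concerns where $\Dc_n$ comes from.

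\textbf{The planar case is vacuous, and step (1) fails as stated.} By definition (and Proposition~\ref{prop:st}), ``not $st$-planar'' just means that $H=G\cup\{st\}$ is non-planar, so your step~(2) is empty. Your remark that $\Dc_n$ is presumably not a Kuratowski obstruction is also wrong. The neighbors of $s$ (the odd $u_i$) and of $t$ (the even $u_i$) interleave along the cycle $u_1u_2\cdots u_{2n}$, so $\Dc_n\cup\{st\}$ is non-planar. For $n=2$ its underlying simple graph is exactly $K_{3,3}$, with sides $\{s,u_2,u_4\}$ and $\{t,u_1,u_3\}$, so no bimodality argument is needed. Hence everything must come out of step~(1), and step~(1) as you state it is false. $\Dc_2$ has no idle edges, satisfies (GN) vacuously (each internal vertex has exactly two internal neighbors), and is not $st$-planar. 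Yet it contains none of $\Fc_5$, the $\Fc_{33}$-graphs, or their reversals as a butterfly minor:
\begin{itemize}
\item A six-vertex minor of $\Dc_2$ is a spanning subgraph, so it has at most twelve edges and no directed path of length $4$. This excludes every $\Fc_{33}$-graph and its reversal.
\item Reaching five vertices costs at least two edges, since no edge is initially idle. That leaves at most ten edges, fewer than the eleven of $\Fc_5$.
\end{itemize}
In $\Dc_2$ the cycle in which the Kuratowski subdivision $S$ meets $G\setminus\{s,t\}$ has two source-switches, so no rerouting makes its branch paths directed.

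\textbf{The missing switch count.} Each source-switch of this cycle $C$ receives two parallel edges from $s$, and each sink-switch sends two to $t$. If $C$ has $n\ge 2$ source-switches, keep $C$ and these parallel edges, delete everything else, and contract the directed arcs; this gives $\Dc_n$. Only when $C$ has a single source-switch $p$ and a single sink-switch $q$ can you use the parallel edges to make $p,q$ branch vertices. Then all branch paths are directed and contract to $\Fc_5$ or to $\Fc_{33}^{(1)},\Fc_{33}^{(2)},\Fc_{33}^{(3)}$. These are afterwards refined by non-idleness to the (a)/(b)/(c) versions, and $\Fc_{33}^{(2)},\Fc_{33}^{(3b)},\Fc_{33}^{(3c)}$ reduce further to $\Fc_5$.

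\textbf{(GN) is misapplied, and the organizing dichotomy is missing.} (GN) concerns only internal vertices with at least three \emph{internal} neighbors. It says nothing about branch vertices of degree $\ge 3$ in $H$, and it does not connect ``every branch vertex'' to $s$ and $t$. Its actual role is a dichotomy that your open-ended case analysis (``branch vertices, on subdivided edges, or outside it'') lacks.
\begin{itemize}
\item If some internal vertex $v$ has three internal neighbors, then three good neighbors of $v$, together with non-idleness at $v$, directly produce $\Fc_{33}^{(4)}$, $\Fc_{33}^{(5a)}$ or $\Fc_{33}^{(5b)}$ up to reversal. No Kuratowski subgraph is needed.
\item Otherwise every internal vertex has at most two internal neighbors. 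Then $s$ and $t$ must be branch vertices of $S$, and the branch paths from $s$ and to $t$ reaching the other branch vertices are single edges. Also, $S\setminus\{s,t\}$ is a subdivided triangle or $4$-cycle, since a subdivided $K_{1,3}$ would have a center with three internal neighbors. Finally, a source-switch of a cycle in $G\setminus\{s,t\}$ can have no in-neighbor other than $s$. Together with non-idleness, this last fact is what supplies the parallel edges used above.
\end{itemize}
Without this reduction and the switch count, your step~(1) has no mechanism that terminates in the listed graphs.
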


For the forbidden graphs listed in Theorem \ref{thm:main2}, see Section~\ref{sec:onlyif}.
Since butterfly minors only allow contractions of idle edges,
they are substantially more restrictive than ordinary graph minors.
Consequently, the Robertson--Seymour theory does not apply,
and there is no general reason to expect a finite excluded-minor
characterization for butterfly-minor-closed classes. See Remark~\ref{rmk:forbidden}.

\medskip
\noindent\textbf{Organization.}
Section~\ref{sec:prelim} recalls flow polytopes, order polytopes, butterfly minors, and $st$-planarity.
Section~\ref{sec:(iii)to(i)} proves Theorem \ref{thm:main1} (iii) $\Rightarrow$ (i).
In Section~\ref{sec:onlyif}, the forbidden graphs are displayed, and we see that the only-if direction of Theorem~\ref{thm:main2} directly follows.
Section~\ref{sec:c-to-b} proves the if direction of Theorem~\ref{thm:main2} by a graph-theoretic analysis of Kuratowski subdivisions.

\medskip
\noindent\textbf{Acknowledgments.}
The first named author A. H. is supported by JSPS KAKENHI JP24K00521, and the second named author H. O. is supported by JSPS KAKENHI JP24K00534.

\bigskip
\section{Preliminaries}\label{sec:prelim}

In this section, we describe necessary notions used in this paper.
\subsection{Flow polytopes}
Throughout this paper, DAGs may have parallel edges but no loops.
Let $G=(V,E)$ be a finite DAG with a unique source $s$ and a unique sink $t$.
The \emph{flow polytope} of $G$ with unit flow is
\[\Fc(G)=\left\{(x_e)_{e \in E} \in \RR_{\geq 0}^E: \sum_{e\in \operatorname{out}(v)}x_e-\sum_{e\in \operatorname{in}(v)}x_e=
\begin{cases}
1,&v=s,\\
-1,&v=t,\\
0,&v\in V \setminus \{s,t\}
\end{cases}\right\},\]
where $\mathrm{out}(v)$ (resp. $\mathrm{in}(v)$) denotes the set of arrows in $G$ whose tail (resp. head) is $v$.
By definition, we see that the reversal of all edges gives the same flow polytope.
Its vertices are the characteristic vectors of directed \(s\)-to-\(t\) paths. Hence, \(\mathcal F(G)\) is a \(0/1\)-polytope, and in particular a lattice polytope.
When $G$ is connected, its dimension is $\dim \Fc(G)=|E|-|V|+1$.

\subsection{Order polytopes}
For a finite poset $P$ equipped with a partial order $\prec$, the \emph{order polytope} of $P$ is
\[\Oc(P)=\{(x_p)_{p\in P}\in \RR^P: 0\leq x_p\leq 1 \text{ for all }p\in P,\ x_p\leq x_q \text{ whenever }p\prec q\}. \]
We collect some fundamental facts on $\Oc(P)$. See \cite{Stanley} for the details.
\begin{itemize}
    \item We have $\dim \Oc(P)=|P|$.
    \item The vertices of $\Oc(P)$ are characteristic vectors of order filters of $P$, and order filters are naturally indexed by their sets of minimal elements, equivalently by antichains of $P$.
    Hence, the number of vertices of $\Oc(P)$ is equal to the number of antichains of $P$.
    \item The facets of $\Oc(P)$ are indexed by the cover relations in $P\cup\{\hat0,\hat1\}$,
    where $\hat0$ (resp. $\hat1$) is a new minimal (resp. maximal) element outside of $P$.
    Thus, the number of facets of $\Oc(P)$ is equal to the number of edges in the Hasse diagram of $P\cup\{\hat0,\hat1\}$.
    Throughout this paper, we use the notation $p \lessdot q$ to indicate that $q$ covers $p$.
    \item For a lattice polytope $Q \subset \RR^d$, let
    \[\mathrm{codeg}(Q)=\min\{k \in \ZZ_{>0} : kQ^\circ \cap \ZZ^d \neq \emptyset\},\]
    where $Q^\circ$ denotes the relative interior of $Q$. We call $\mathrm{codeg}(Q)$ the \textit{codegree} of $Q$.
    For the case of order polytopes, by the description of $\Oc(P)$, we see that
    \[\codeg(\Oc(P))=\ell(P)+2,\] where $\ell(P)$ is the maximum number of cover relations in a chain of $P$.
    \item Given two finite posets $P$ and $Q$, their {\it ordinal sum} $P\oplu Q$ is the poset on the disjoint union $P\sqcup Q$ whose partial order extends the orders of $P$ and $Q$ and satisfies $p \prec q$ for all $p\in P$ and $q\in Q$.
\end{itemize}
We see that the order polytopes of $P$ and its reversal are unimodularly equivalent.
\begin{lem}\label{lem:faces-order}
Every face of an order polytope is unimodularly equivalent to an order polytope.
\end{lem}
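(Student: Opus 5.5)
Every face of $\Oc(P)$ is an intersection of facets, and by Stanley's description each facet of $\Oc(P)$ comes from a cover relation $p\lessdot q$ in $\hat P=P\cup\{\hat0,\hat1\}$, with facet hyperplane $x_p=x_q$ (setting $x_{\hat0}=0$, $x_{\hat1}=1$). So a face $F$ is cut out by imposing $x_p=x_q$ for all cover relations in some set $C$. Let $\sim$ be the equivalence relation on $\hat P$ generated by $C$. On $F$ all coordinates in one class are equal; the class of $\hat0$ is forced to $0$ and the class of $\hat1$ to $1$. If $\hat0\sim\hat1$ then $F$ is empty, which we exclude.

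Next, define a quotient poset. Let $Q$ be the set of $\sim$-classes other than those of $\hat0$ and $\hat1$. Order $Q$ by the transitive closure of the relations $[p]\le[q]$ whenever $p\prec q$ in $\hat P$. We must check this is antisymmetric. If it were not, there would be a cycle of classes, giving elements with $x_a\le x_b$ and $x_b\le x_a$ forced on $F$. Then either these classes could be merged, or the cycle contradicts the face structure. The cleanest fix is to replace $C$ by the full set of cover relations whose hyperplanes contain $F$, that is, to take $F$'s own equality set. Then any cycle of classes consists of elements that are equal on all of $F$. Every cover relation along a saturated chain between them would then also be tight on $F$, so those elements are already in one class. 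With this choice the preorder is a partial order, with $[\hat0]$ below everything and $[\hat1]$ above everything.

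Now define the map. Choose a representative $r_c\in c$ for each $c\in Q$ and let $\pi:\RR^P\to\RR^Q$ be the coordinate projection $x\mapsto(x_{r_c})_{c\in Q}$. On the affine span of $F$, $x$ is determined by $\pi(x)$: each coordinate equals that of its class representative, or $0$ or $1$. Hence $\pi$ restricted to $\aff(F)$ is injective. Its inverse $y\mapsto x$ with $x_p=y_{[p]}$ (or the constant) is an integral affine map. So $\pi$ gives a lattice-preserving bijection $\aff(F)\cap\ZZ^P\to\ZZ^Q$, which is exactly a unimodular equivalence.

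Finally, check that $\pi(F)=\Oc(Q)$. A point of $F$ satisfies $0\le x\le1$ and the order inequalities of $P$, which descend to those of $Q$, so $\pi(F)\subseteq\Oc(Q)$. Conversely, given $y\in\Oc(Q)$, lift it by setting $x_p=y_{[p]}$. Each relation $p\prec q$ in $P$ gives $[p]\le[q]$, so $x\in\Oc(P)$. The point $x$ satisfies every equality defining $F$, so $x\in F$.

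The main obstacle is the well-definedness step, namely antisymmetry of the quotient order. Choosing $C$ to be the maximal set of tight cover relations handles it. Some care is also needed to confirm that elements forced equal on $F$ are joined by tight cover relations, which follows because $x$ is monotone along any saturated chain.
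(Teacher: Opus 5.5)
Your proof is correct and takes the same approach as the paper: the face is cut out by equalities that merge elements of $P$ into classes or pin them to $0$ or $1$, and deleting the pinned coordinates and projecting onto class representatives identifies the face with the order polytope of the quotient poset. You also supply details the paper's sketch leaves implicit, namely antisymmetry via the maximal set of tight cover relations, and the lattice bijection. Its surjectivity onto $\ZZ^Q$ needs the image of your integral inverse map to lie in $\aff(F)$. That follows only from your later lifting step together with the full-dimensionality of $\Oc(Q)$, so that claim should come after the lift.
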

\begin{proof}
A face of an order polytope is obtained by setting some inequalities $x_p\geq0$, $x_p\leq1$, or $x_p\leq x_q$ to equalities.  These equalities identify some elements of $P$, or force them to be equal to $0$ or $1$.  After deleting forced coordinates and quotienting by the identifications, one obtains the order polytope of another poset.
\end{proof}

\subsection{Unimodular equivalence}

For a nonempty polytope $P\subset \RR^n$, let \[L_P:=\operatorname{span}_{\RR}\{x-y:x,y\in P\}\] and \[\aff(P):=x_0+L_P,\]
where $x_0\in P$. In particular, $L_P$ is the direction space of $\aff(P)$.

Let $P\subset \RR^m$ and $Q\subset \RR^n$ be lattice polytopes.
\begin{itemize}
    \item We say that $P$ and $Q$ are \emph{affinely equivalent} if there exists an affine bijection
    \[\varphi:\aff(P)\longrightarrow \aff(Q)\] such that $\varphi(P)=Q$.
    \item We say that $P$ and $Q$ are \emph{unimodularly equivalent} if there exists an affine bijection $\varphi:\aff(P)\longrightarrow \aff(Q)$ such that $\varphi(P)=Q$ and $\varphi$ restricts to a bijection \[\aff(P)\cap \ZZ^m \longrightarrow \aff(Q)\cap \ZZ^n.\]
    Equivalently, after choosing lattice bases of the affine lattices $\aff(P)\cap \ZZ^m$ and $\aff(Q)\cap \ZZ^n$, the map $\varphi$ is of the form
\[x\longmapsto Ux+b,\]
where $U\in \operatorname{GL}_d(\ZZ)$ and $b\in \ZZ^d$ with $d=\dim P=\dim Q$.
Unimodular equivalence preserves many properties.
\end{itemize}
Note that what the authors of \cite{MMS} call integrally equivalent is the same as what we call unimodularly equivalent in this paper.

\subsection{Butterfly minors and $st$-planarity}
Recall that an edge $e=uv$ of a DAG is called \emph{idle} if $e$ is the only outgoing edge of $u$ or if $e$ is the only incoming edge of $v$.
A \emph{butterfly contraction} of a DAG $G$ is the contraction of an idle edge.
A \emph{butterfly minor} is obtained by repeatedly deleting edges and contracting idle edges.
A \emph{complete contraction} of $G$ is obtained by successively contracting idle edges until no idle edges remain.
The complete contraction is stated to be unique in \cite[Definition 2.4]{BBDH}. Since this uniqueness plays an important role in what follows, we include a proof for completeness.

\begin{prop}\label{prop:idle}
Let $G$ be a DAG. Then any two complete contractions of $G$ are isomorphic.
\end{prop}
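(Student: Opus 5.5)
Our plan is to apply the diamond lemma (Newman's lemma) to the rewriting system on isomorphism classes of DAGs in which a DAG rewrites to any DAG obtained by contracting one idle edge. Each contraction decreases the number of vertices by one, so the system is terminating. A complete contraction is exactly a normal form. By Newman's lemma, uniqueness of normal forms up to isomorphism follows from local confluence: if $G_1=G/e$ and $G_2=G/f$ for distinct idle edges $e,f$ of $G$, then $G_1$ and $G_2$ have a common descendant up to isomorphism.

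We first record two facts. (a) Contracting an idle edge $uv$ of a DAG gives a DAG. A directed cycle through the merged vertex would come from a directed $v\to u$ path in $G$ or from a cycle in $G$, which is impossible. Loops cannot appear, because any other $u\to v$ edge $e'$ besides $uv$ would contradict idleness: $e'$ would be another out-edge of $u$ and another in-edge of $v$. (b) If $e=uv$ is idle and $f=xy$ is another edge, then in $G/e$ the image of $f$ keeps its tail out-degree and head in-degree, except for the merged vertex $w$. That vertex has $\mathrm{out}(w)=(\mathrm{out}(u)\setminus\{e\})\cup \mathrm{out}(v)$ and $\mathrm{in}(w)=\mathrm{in}(u)\cup(\mathrm{in}(v)\setminus\{e\})$.

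For local confluence, the main case is when $f$ remains idle in $G/e$ and $e$ remains idle in $G/f$. Then $(G/e)/f\cong(G/f)/e$, since contracting two edges in either order identifies the same vertex classes and deletes the same two edges. The work lies in the cases where idleness is destroyed.

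Suppose $e=uv$ is idle because it is the unique out-edge of $u$, and $f$ shares an endpoint with $e$. If $f=u'u$ enters $u$, then $f$'s head keeps the in-edges of $u$ and gains those of $v$, so $f$ may lose idleness only if $f$ was the unique in-edge of $u$. If $f=vy$ leaves $v$, then $f$'s tail gains no out-edges because $u$ has only $e$ out, so $f$ stays idle. If $f$ also starts at $u$, then $f=e$, a contradiction. If $f$ enters $v$ and is idle as the unique out-edge of its tail $x$, it stays idle. If instead $f$ is idle only as the unique in-edge of $v$, then $e$ is also an in-edge of $v$, so $f=e$, a contradiction.

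In the problematic case, $u$ has exactly one in-edge $f=u'u$ and one out-edge $e=uv$, and we may contract only one of them. Then $G/e$ and $G/f$ are both isomorphic to the graph obtained by suppressing the degree-two vertex $u$, i.e.\ replacing the path $u'\to u\to v$ by a single edge $u'\to v$. So they are already isomorphic and no further step is needed. The dual situation, where idleness comes from unique in-edges, follows by reversing all edges. If $e$ and $f$ share no endpoint, idleness of each is unaffected by contracting the other, except through parallel edges becoming identified. Contraction never merges edges, so degrees of untouched vertices are unchanged, and the commuting square closes.

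The main obstacle is the bookkeeping in this case analysis. All the ways $e$ and $f$ can meet must be checked: head-to-tail, sharing a tail, sharing a head, and parallel edges. One must also check that when idleness fails, the two one-step contractions coincide up to isomorphism, which happens exactly via the degree-two suppression above. Once local confluence is established, Newman's lemma together with termination gives that all complete contractions are isomorphic.
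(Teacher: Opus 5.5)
Your proposal is correct and follows essentially the same route as the paper: Newman's lemma, termination because each contraction lowers the vertex count, and local confluence by case analysis. In that analysis, the only configuration where idleness is destroyed is a vertex with exactly one in-edge and one out-edge, and there $G/e\cong G/f$ directly. You organize the cases by why $e$ is idle and then dualize, whereas the paper organizes them by how $e$ and $f$ meet; this is only a bookkeeping difference. In the commuting subcases you should also state explicitly that $e$ stays idle in $G/f$. For example, when $f=u'u$ is the unique out-edge of $u'$, the merged vertex has $e$ as its only out-edge. This check is routine.
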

\begin{proof}
Since $G$ is a DAG, contracting an idle edge again gives a DAG.
Indeed, let $e=u\to v$ be idle.
If $e$ is the only outgoing edge of $u$, then every edge leaving the contracted vertex in $G/e$ comes from an edge leaving $v$ in $G$.
Hence any directed cycle in $G/e$ passing through the contracted vertex lifts to a directed cycle in $G$.
The case where $e$ is the only incoming edge of $v$ is dual.
Thus contracting an idle edge stays within the class of DAGs.

We apply Newman's lemma to the reduction relation on the isomorphism classes of finite directed multigraphs defined by idle-edge contractions.
Each contraction decreases the number of vertices by one, so every sequence of idle-edge contractions terminates.
Thus it remains to prove local confluence.
Let $e$ and $f$ be two distinct idle edges of $G$.
We show that the two one-step contractions $G/e$ and $G/f$ have a common descendant.

\bigskip

\noindent
{\bf Case 1.}
($e$ and $f$ have no common endpoint.)
Then contracting one of them does not affect the idleness of the other.
Thus the image of $f$ is idle in $G/e$, the image of $e$ is idle in $G/f$, and contracting both edges gives the same directed multigraph, up to the evident isomorphism.

\bigskip

\noindent
{\bf Case 2.}
($e$ and $f$ have a common endpoint.)
Since $e$ and $f$ are idle, they are not parallel.

\bigskip

\noindent
{\bf Case 2.1.}
($e$ and $f$ have the same tail.)
Write
\[
e=u\to v,\qquad f=u\to w,
\qquad v\neq w.
\]
Since $e$ and $f$ are idle, $e$ is the only incoming edge of $v$, and $f$ is the only incoming edge of $w$.
After contracting $e$, the image of $f$ is still the only incoming edge of $w$, and hence is idle.
Similarly, after contracting $f$, the image of $e$ is still idle.
Contracting both edges gives the same graph.
The case where $e$ and $f$ have the same head is dual.

\bigskip

\noindent
{\bf Case 2.2.}
($e$ and $f$ form a directed path.)
Let
\[
e=u\to v,\qquad f=v\to w.
\]
If $e$ is the only outgoing edge of $u$, then the image of $e$ remains idle after contracting $f$.
Moreover, after contracting $e$, the image of $f$ is idle:
\begin{itemize}
    \item
    If $f$ is the only incoming edge of $w$,
    then so is the image of $f$.
    \item
    If $f$ is the only outgoing edge of $v$,
    then the image of $f$ is the only outgoing edge of the contracted vertex.
\end{itemize}
Thus the two contractions can be performed in either order and lead to the same graph.
The dual argument applies if $f$ is idle because it is the only incoming edge of $w$.

The only remaining case is the following:
\[e=u\to v \text{ is idle because it is the only incoming edge of }v,\]
and
\[f=v\to w \text{ is idle because it is the only outgoing edge of }v.\]
In this case, $v$ has precisely one incoming edge and precisely one outgoing edge, namely $e$ and $f$.
Hence $v$ is incident with no other edge.
Let $H$ be the directed multigraph obtained from $G$ by deleting $v$ and the two edges $e,f$, and by adding a single edge $u\to w$.
Then both $G/e$ and $G/f$ are naturally isomorphic to $H$: in $G/e$, the image of $f$ becomes the edge $u\to w$, while in $G/f$, the image of $e$ becomes the edge $u\to w$.
Thus the two one-step contractions $G/e$ and $G/f$ are already isomorphic.

\bigskip

This proves local confluence.
Since the reduction relation is terminating and locally confluent, it is confluent by Newman's lemma.
Consequently, all maximal sequences of idle-edge contractions starting from $G$ end in isomorphic directed multigraphs with no idle edges.
Thus the complete contraction of $G$ is unique up to isomorphism.
\end{proof}

\begin{rmk}
     The description in \cite[Definition~2.4]{BBDH} should be interpreted with some care: contracting an idle edge does not necessarily decrease the number of idle edges by exactly one, since the contraction may destroy the idleness of another edge. The complete contraction is therefore more naturally defined as the result of repeatedly contracting idle edges until none remain. By Proposition~\ref{prop:idle}, this result is unique up to isomorphism.
 \end{rmk}

We shall use the following elementary observation together with the standard facet description of flow polytopes (see, e.g., \cite{BBDH, DKK}).
We include a proof for completeness, since this description plays a central role in this paper.
\begin{lem}\label{lem:new-butterfly}
Let $\widetilde{G}$ be the complete contraction of $G$.
Then $\Fc(G)$ and $\Fc(\widetilde{G})$ are unimodularly equivalent, and the facets of $\Fc(G)$ are naturally indexed by the edges of $\widetilde{G}$.
In particular, if $G$ has no idle edges, then every coordinate inequality $x_e\geq 0$  $(e\in E)$ defines a distinct facet of $\Fc(G)$.
\end{lem}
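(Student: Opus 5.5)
By induction on the number of idle-edge contractions, using Proposition~\ref{prop:idle} to know that the end result $\widetilde G$ does not depend on the order. The basic step is: if $e=uv$ is idle in $G$, then $\Fc(G)$ and $\Fc(G/e)$ are unimodularly equivalent. Suppose $e$ is the only outgoing edge of $u$; the other case follows by reversing all edges, which does not change the flow polytope. Conservation at $u$ (or the source condition if $u=s$) forces
\[
x_e=\sum_{f\in \operatorname{in}(u)}x_f \quad\text{when } u\neq s, \qquad x_e=1 \quad\text{when } u=s.
\]
So $x_e$ is an integral affine function of the remaining coordinates on $\aff(\Fc(G))$. The coordinate projection $\pi\colon\RR^E\to\RR^{E\setminus\{e\}}$ is therefore injective on $\aff(\Fc(G))$. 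Its inverse on the image is the integral affine map that reinserts $x_e$ by the formula above, so $\pi$ gives a bijection between the lattice points of the two affine hulls.

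It remains to identify the image $\pi(\Fc(G))$ with $\Fc(G/e)$. In $G/e$ the merged vertex $w$ has in-edges $\operatorname{in}(u)\cup\operatorname{in}(v)\setminus\{e\}$ and out-edges $\operatorname{out}(v)$. Its conservation equation is exactly the sum of the equations at $u$ and at $v$ in $G$, with $x_e$ cancelling. All other vertex equations are unchanged. Conversely, a flow on $G/e$ lifts by defining $x_e$ via the displayed formula. Nonnegativity of $x_e$ is then automatic, since $x_e$ is a sum of nonnegative coordinates or equals $1$. Hence $\pi$ restricts to a unimodular equivalence $\Fc(G)\cong\Fc(G/e)$. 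Composing along a maximal contraction sequence gives $\Fc(G)\cong\Fc(\widetilde G)$.

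For the facet statement, the equivalences identify the facets of $\Fc(G)$ with those of $\Fc(\widetilde G)$. It therefore suffices to show that when $H$ has no idle edges, each inequality $x_e\ge 0$ defines a facet and distinct edges give distinct facets. The facets of $\Fc(H)$ are among the sets $\{x_e=0\}\cap\Fc(H)$, since these are the only inequalities in the description. Take an edge $e=uv$ and let $H\setminus e$ be the deletion. Because $e$ is not idle, $u$ has another out-edge and $v$ has another in-edge. In a DAG with unique source and sink this implies that $H\setminus e$ still has a unique source $s$ and sink $t$, and it stays connected. Hence
\[
\dim\Fc(H\setminus e)=(|E|-1)-|V|+1=\dim\Fc(H)-1,
\]
so $\{x_e=0\}$ is a facet. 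For distinctness, given edges $e\ne f$ it suffices to find an $s$–$t$ path through $f$ avoiding $e$. Such a path exists because every edge of $H\setminus e$ lies on an $s$–$t$ path there: every vertex of $H\setminus e$ is reachable from $s$ and reaches $t$, by the unique source and sink property.

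The main obstacle is the connectivity and source/sink check for $H\setminus e$. One needs that deleting a non-idle edge creates no new source or sink and does not disconnect the graph; both follow from the extra out-edge at $u$ and in-edge at $v$. Another point to handle is parallel edges: two parallel edges $e,f$ between the same vertices give distinct facets, since a path using $f$ avoids $e$, and neither edge is idle when such parallels exist. Finally, in the contraction step the degenerate case $u=s$ should be checked separately, since the source simply moves to the merged vertex.
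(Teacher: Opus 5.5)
Your proof is correct. The contraction step is the same as in the paper: you eliminate $x_e$ using the conservation equation at $u$ (or $x_e=1$ when $u=s$) and reinsert it by the same integral formula. You also check explicitly that the projection of $\Fc(G)$ is exactly $\Fc(G/e)$, because the merged vertex's equation is the sum of the equations at $u$ and $v$; the paper leaves this implicit.

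You genuinely differ from the paper on the facet statement. The paper cites \cite[Proposition~2]{DKK}, which says the facets of the nonnegative flow cone correspond to the non-idle edges, and then passes to the unit-flow section. You instead prove the statement directly for a graph $H$ with no idle edges, in two steps. First, deleting a non-idle edge $e$ creates no new source or sink and keeps $H\setminus e$ connected. Hence the face $\Fc(H)\cap\{x_e=0\}$, which is $\Fc(H\setminus e)$, has dimension $|E|-|V|=\dim\Fc(H)-1$. Second, the facets for $e\neq f$ are distinct, because some $s$--$t$ path runs through $f$ and avoids $e$.

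The paper's route is shorter and rests on a known cone-level result. Yours is self-contained and uses only the dimension formula $|E|-|V|+1$ and reachability in a DAG with a unique source and sink. It also makes the distinctness of the facets explicit, including for parallel edges, rather than inheriting it from the citation.
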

\begin{proof}
    Let \(e=uv\) be an idle edge. If \(e\) is the only outgoing edge of \(u\), then $x_e=\sum_{f\in\operatorname{in}(u)}x_f$ when \(u\neq s\), while \(x_e=1\) when \(u=s\). Similarly, if \(e\) is the only incoming edge of \(v\), then $ x_e=\sum_{g\in\operatorname{out}(v)}x_g $ when \(v\neq t\), while \(x_e=1\) when \(v=t\). Thus, deleting the coordinate \(x_e\) and restoring it by the appropriate formula above give mutually inverse affine maps preserving the corresponding affine lattices. Hence, $\Fc(G)$ is unimodularly equivalent to $\Fc(G/e)$.
    Iterating this argument, it follows that $\Fc(G)$ is unimodularly equivalent to $\mathcal F(\widetilde G)$.
    By construction, \(\widetilde G\) has no idle edges.
    From \cite[Proposition~2]{DKK}, the facets of the nonnegative flow cone are indexed by its non-idle edges.
    Taking the unit flow section shows that the facets of \(\mathcal F(\widetilde G)\) are precisely \[ \mathcal F(\widetilde G)\cap\{x_e=0\}, \qquad e\in E(\widetilde G), \]
    as required.
\end{proof}

\begin{lem}\label{lem:butterfly-deletion}
Let $H$ be a graph obtained by deleting an edge of $G$.
If $\Fc(H)$ is nonempty, then $\Fc(H)$ is unimodularly equivalent to a face of $\Fc(G)$.
\end{lem}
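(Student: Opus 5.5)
The natural candidate for the face is the intersection of $\Fc(G)$ with the coordinate hyperplane of the deleted edge. Let $H=G\setminus f$ for an edge $f$ of $G$, and set
\[
F:=\Fc(G)\cap\{x_f=0\}.
\]
Since $x_f\ge 0$ is a valid inequality for $\Fc(G)$, the set $F$ is a face of $\Fc(G)$. The plan is to show that the coordinate projection $\pi$, which forgets the coordinate $x_f$, maps $F$ bijectively onto $\Fc(H)$, and that $\pi$ together with its inverse gives a unimodular equivalence.

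The first step is the set-theoretic identification. The coordinate embedding
\[
\iota:\RR^{E\setminus\{f\}}\to\RR^E,\qquad y\mapsto (y,0),
\]
carries $\Fc(H)$ into $\Fc(G)$. Indeed, the conservation equations of $G$ restricted to vectors with $x_f=0$ are exactly the conservation equations of $H$. Nonnegativity is clearly preserved.

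One point needs care here. The paper defines $\Fc(H)$ with $s$ and $t$ as source and sink, but $H$ may acquire additional sources or sinks. This is harmless. I would take $\Fc(H)$ to mean the polytope cut out by the same netflow equations at the vertices of $G$ (now of $H$). Then the identity $\Fc(H)\cong F$ holds verbatim, because the defining systems coincide after imposing $x_f=0$. The hypothesis that $\Fc(H)$ is nonempty ensures that $F$ is a nonempty face, rather than the empty face.

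Thus $\iota$ maps $\Fc(H)$ onto $F$, with inverse $\pi|_F$.

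The second step is unimodularity. Both $\iota$ and $\pi$ are linear maps sending integer vectors to integer vectors.
\begin{itemize}
\item The map $\iota$ sends $\aff(\Fc(H))$ into $\aff(F)$.
\item The map $\pi$ sends $\aff(F)\subseteq\{x_f=0\}$ into $\aff(\Fc(H))$.
\item The two maps are mutually inverse on these affine spans, since $\aff(F)$ lies in the hyperplane $x_f=0$ and $\iota(\aff(\Fc(H)))=\aff(F)$.
\end{itemize}
Because both maps preserve integrality, they restrict to mutually inverse bijections between $\aff(\Fc(H))\cap\ZZ^{E\setminus\{f\}}$ and $\aff(F)\cap\ZZ^E$. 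This is exactly unimodular equivalence in the sense of Section~\ref{sec:prelim}.

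The main obstacle is only bookkeeping: making sure that $\aff(F)$ is precisely $\iota(\aff(\Fc(H)))$, and not something larger. This follows because $\iota$ is an affine injection with $\iota(\Fc(H))=F$, so it maps affine hulls onto affine hulls. The remaining checks are routine.
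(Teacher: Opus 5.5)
Your proposal is correct and follows the paper's own proof, which is the one-line observation that deleting the edge corresponds to taking the face $x_f=0$ of $\Fc(G)$. You additionally carry out the routine checks the paper leaves implicit: the coordinate embedding and projection are mutually inverse integral maps between the affine hulls, and $\Fc(H)$ is read as being defined by the same netflow equations even if $H$ acquires extra sources or sinks.
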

\begin{proof}
Deleting an edge $e$ of $G$ corresponds to taking the face $x_e=0$ of $\Fc(G)$.
\end{proof}

Lemmas~\ref{lem:new-butterfly} and \ref{lem:butterfly-deletion} immediately prove the following.

\begin{cor}\label{cor:butterfly-face}
Let $H$ be a butterfly minor of $G$.
If $\Fc(H)$ is nonempty, then $\Fc(H)$ is unimodularly equivalent to a face of $\Fc(G)$.
\end{cor}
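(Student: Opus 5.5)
The plan is to induct on the number of operations producing $H$ from $G$, combining the two preceding lemmas with transitivity of unimodular equivalence and of the face relation. Write $H$ as the end of a sequence $G=G_0,G_1,\dots,G_k=H$, where each $G_{i+1}$ is obtained from $G_i$ either by deleting one edge or by contracting one idle edge. First I would check that nonemptiness of $\Fc(H)$ propagates backwards: each $\Fc(G_i)$ is nonempty. For a contraction step, the proof of Lemma~\ref{lem:new-butterfly} gives an affine bijection between $\Fc(G_i)$ and $\Fc(G_{i+1})$. For a deletion step, $\Fc(G_{i+1})$ is the face $\{x_e=0\}$ of $\Fc(G_i)$, so nonemptiness of the former implies nonemptiness of the latter.

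For a single step I would argue as follows. If $G_{i+1}=G_i/e$ with $e$ idle, the argument in the proof of Lemma~\ref{lem:new-butterfly}, applied to the single edge $e$, shows that $\Fc(G_i/e)$ is unimodularly equivalent to $\Fc(G_i)$. Hence it is equivalent to a face of $\Fc(G_i)$, namely the improper one. If $G_{i+1}=G_i\setminus e$, Lemma~\ref{lem:butterfly-deletion} gives the claim directly; indeed, $\Fc(G_i\setminus e)$ is literally the face $\Fc(G_i)\cap\{x_e=0\}$ after dropping the zero coordinate, and dropping that coordinate is a lattice-preserving bijection.

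Chaining the steps requires one transport fact. Let $\varphi:\aff(P)\to\aff(Q)$ be a unimodular equivalence with $\varphi(P)=Q$, and let $F'$ be a face of $Q$. Then $\varphi^{-1}(F')$ is a face of $P$, and $\varphi^{-1}$ restricts to a unimodular equivalence $F'\cong\varphi^{-1}(F')$. The face part holds because affine bijections carry supporting hyperplanes to supporting hyperplanes. The lattice part needs more care: $\aff(F')\cap\ZZ^n$ is the intersection of $\aff(Q)\cap\ZZ^n$ with an affine subspace, and $\varphi^{-1}$ maps it bijectively onto $\aff(\varphi^{-1}F')\cap\ZZ^m$. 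I also need that a face of a face is a face. With these, induction on $i$ shows that $\Fc(G_i)$ is unimodularly equivalent to a face of $\Fc(G)$ for every $i$, and in particular for $H=G_k$.

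The main obstacle is making sure the intermediate graphs remain in the setting where $\Fc$ and the lemmas make sense. After a deletion, $G_i$ may acquire new sources or sinks, or edges through which no $s$-$t$ flow passes. I would handle this by observing that $\Fc(\cdot)$ is defined by the same equations for any DAG with designated vertices $s,t$. Coordinates forced to be zero simply lie in a face, so Lemma~\ref{lem:butterfly-deletion} and the single-edge contraction argument use only these defining equations, not uniqueness of the source and sink. If the contracted edge is incident to $s$ or $t$, the merged vertex inherits the label, and the formulas $x_e=1$ or $x_e=\sum x_f$ from the proof of Lemma~\ref{lem:new-butterfly} still apply.
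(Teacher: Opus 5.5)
Your plan is the same chaining the paper intends: it only says the corollary follows from Lemmas~\ref{lem:new-butterfly} and~\ref{lem:butterfly-deletion}. The transport of faces along unimodular equivalences and the deletion step are fine. The gap is in how you dispose of the obstacle you flag at the end: the single-edge contraction argument does \emph{not} follow from the defining equations alone.

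Once deletions have created dead ends, the vertex carrying the label $t$ can acquire outgoing edges, by contracting its only incoming edge. If an edge $e=u\to v$ is idle only because it is the unique outgoing edge of that vertex $u$, the equation at $u$ reads $x_e=\sum_{f\in\mathrm{in}(u)}x_f-1$. Neither formula you quote applies, and the restored value can be negative.

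Concretely, let $G$ have edges $s\to a$, $a\to w$, $a\to z$, $w\to t$, $w\to z$, $z\to t$.
\begin{enumerate}
\item Delete $z\to t$. Now $z$ is a sink, and $\Fc$ is the single point given by the path $s\,a\,w\,t$.
\item The edge $w\to t$ is now the only edge into $t$. Contracting it still gives a point. The merged $t$-vertex now has the unique outgoing edge $t\to z$, while $z$ also has the incoming edge $a\to z$.
\item Contract this idle edge. The result is $s\to a$ followed by two parallel edges $a\to t$, whose flow polytope is a segment.
\end{enumerate}
At the point of that segment using the image of $a\to z$, the restoring formula gives $x_e=-1$. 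So one idle contraction turns a point into a segment, and your claim $\Fc(G_i/e)\cong\Fc(G_i)$ fails.

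The same mechanism breaks your backward propagation of nonemptiness. Take edges $s\to w$, $s\to z$, $w\to t$, $w\to z$, $z\to t$. Delete $s\to w$ and $z\to t$, then contract $w\to t$ and then the resulting edge $t\to z$. Both intermediate flow polytopes are empty, but the final one is a point.

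In the first example the corollary's conclusion still holds. $\Fc(H)$ is a unit segment, equivalent to the edge $\Fc(G)\cap\{x_{wt}=0\}$ of the triangle $\Fc(G)$. That edge comes from deleting $w\to t$ instead and contracting $w\to z$ and $z\to t$. But this face is unrelated to the one your chain produces, so careful bookkeeping of the labels $s,t$ cannot repair the induction.

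Your argument does work for contractions where the restoring formula is automatically nonnegative:
\begin{itemize}
\item if $e$ is idle as the only edge leaving $u$, you need $u$ not to carry the sink label;
\item if $e$ is idle as the only edge entering $v$, you need $v$ not to carry the source label.
\end{itemize}
For the remaining contractions, which revive edges that carried no flow, you need a genuinely new argument. Two possible routes:
\begin{itemize}
\item show that the part of $H$ lying on $s$--$t$ paths can be obtained from $G$ by a sequence in which every intermediate graph has all its edges on $s$--$t$ paths, so only safe contractions occur;
\item construct directly a subgraph $G^*\subseteq G$ whose $s$--$t$ paths correspond bijectively to those of $H$, with integral affine formulas for the extra coordinates. $G^*$ would consist of the representing edges of $H$, the contracted edges, and connecting paths in $G$, possibly through edges deleted in the original sequence.
\end{itemize}
The paper's one-line derivation leaves this point implicit as well.
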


In \cite{MMS}, a connected DAG on the vertex set $\{1,2,\ldots,n\}$, whose edges are directed from smaller labels to larger labels, is called \emph{planar}
if it admits a planar drawing such that, whenever vertex $i$ is placed at $(x_i,y_i)$, one has $x_i<x_j$ for $i<j$.
In graph-theoretic terminology, after rotating the picture by $90$ degrees, this is an upward planar drawing.
We will use the following standard characterization of upward planar digraphs.
\begin{prop}[\cite{DBT,Kel}]\label{prop:st}
Let $G$ be a DAG. Then $G$ has an upward planar drawing if and only if $G$ is a spanning subgraph of a planar $st$-graph.
In particular, if $G$ has a unique source $s$ and a unique sink $t$, then $G$ has an upward planar drawing if and only if $G$ is $st$-planar, i.e., $G\cup\{st\}$ is planar.
\end{prop}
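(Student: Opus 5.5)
This is a cited standard result (Di Battista--Tamassia, Kelly), so the plan is to reconstruct its standard proof rather than derive it from earlier material in the paper. I would first prove the general equivalence and then derive the specialization to a unique source and sink.

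For the direction ``spanning subgraph of a planar $st$-graph $\Rightarrow$ upward planar'', I would reduce to the case that $G$ itself is a planar $st$-graph, since deleting edges from an upward drawing keeps it upward. Here a planar $st$-graph means a planar DAG with a unique source $s$ and sink $t$, with the edge $st$ present, or at least embeddable with $s,t$ on the outer face. The construction is by induction on $|E|$ using the standard $st$-numbering and face structure. Every face of a planar $st$-embedding is bounded by two directed paths sharing their endpoints. Hence one can either remove a vertex of degree two on a face boundary (reversing a subdivision), or remove an edge whose removal keeps the graph an $st$-graph. We then draw the smaller graph upward by induction and reinsert the element inside the appropriate face. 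I would carry this out with an explicit dominance-style construction, placing each vertex at height equal to its position in a topological order and choosing $x$-coordinates from the left-to-right order of paths. Kelly's argument is that insertion into a face bounded by two monotone paths can always be done with straight, strictly increasing edges.

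For the converse, given an upward planar drawing of $G$, I would add edges to make every face bounded by two monotone paths, with $s$ at the bottom and $t$ at the top. Concretely:
\begin{itemize}
\item add a new bottom vertex below everything and a new top vertex above everything, joined to all sources and sinks respectively;
\item then saturate each face, connecting each local sink on a face boundary upward to a vertex above it on the same face.
\end{itemize}
Each augmentation keeps planarity and acyclicity because edges go strictly upward. Afterward every internal vertex has both in- and out-edges, so the result is a planar $st$-graph containing $G$ as a spanning subgraph (modulo the added $s,t$, which can be avoided by instead joining to the lowest and highest existing vertices when these are already unique).

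For the ``in particular'' part, suppose $G$ has a unique source $s$ and sink $t$ and is upward planar. Then $s$ is the lowest and $t$ the highest point, so both lie on the outer face. Drawing the edge $st$ around the outside gives a planar drawing of $G\cup\{st\}$. Conversely, if $G\cup\{st\}$ is planar, I would choose an embedding with $st$ on the outer face, so $G$ is a planar $st$-graph and the first part applies. The main obstacle is the face saturation step in the converse, namely showing that one can always add upward edges without crossings until every face is bipolar; for that step I would invoke the cited sources rather than reproving it.
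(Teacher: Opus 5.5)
The paper does not prove this proposition. It quotes Di Battista--Tamassia and Kelly, and only the unique-source/unique-sink specialization is used later.

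\textbf{The specialization.} Your argument for this part is correct. The lowest (resp.\ highest) vertex of an upward drawing is a source (resp.\ sink), so it equals $s$ (resp.\ $t$) and lies on the outer face, and $st$ can be drawn there. Conversely, an embedding of $G\cup\{st\}$ with $st$ on the outer face is a planar $st$-graph, so it remains only to draw planar $st$-graphs upward.

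\textbf{Two repairs to your upward-drawing step.} First, state explicitly that all heights come from one topological order of the \emph{original} graph, and that the drawing realizes the fixed embedding. Otherwise your delete-and-reinsert induction can fail. For example, deleting $a\to b$ from $s\to a\to b\to t$, $s\to b$, $a\to t$, $s\to t$ leaves $a$ and $b$ incomparable, and an arbitrary upward drawing of the smaller graph may place $b$ below $a$. Second, reinsertion into a face bounded by two monotone paths works with a $y$-monotone curve, but not in general with a straight segment, because such faces need not be convex. Straightness is not needed here anyway.

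\textbf{The genuine gap.} It lies in the general implication ``upward planar $\Rightarrow$ spanning subgraph of a planar $st$-graph''. Adding a new bottom and top vertex only shows that $G$ is a \emph{subgraph} of a planar $st$-graph. ``Spanning'' is exactly what this direction must deliver, and your repair does not deliver it.

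Joining the remaining sources directly to the lowest existing vertex can be impossible. Take $s\to a$, $s\to b$, $a\to m$, $b\to m$, $a\to c$, $b\to c$, $m\to c$, $v\to m$, $v\to c$. This graph is upward planar with sources $s$ and $v$. Suppressing the degree-two vertices $s$ and $v$ gives $K_4$ on $\{a,b,m,c\}$ with a doubled edge $mc$, whose embedding is unique. Hence in every planar embedding $s$ and $v$ share no face, and neither can be joined to the other. If you instead meant that new vertices are unnecessary when source and sink are already unique, then the multi-source case is simply not treated.

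Conversely, the face-saturation step you single out as the main obstacle is automatic. Once there is a single source and a single sink on the outer face, every face is bounded by two directed paths.

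\textbf{The standard fix} adds edges only. For each source $v$ other than the lowest vertex $s_0$, follow the vertical ray downward from $v$:
\begin{enumerate}[label=(\alph*)]
\item if it first meets a vertex $u$, add $u\to v$ along the ray;
\item if it first meets an interior point of an edge $a\to b$, add $a\to v$ drawn alongside $a\to b$ and then up the ray;
\item if it meets nothing, $v$ and $s_0$ both lie on the outer face and can be joined by a $y$-monotone curve running below the drawing.
\end{enumerate}
Each step keeps the drawing upward and planar, and removes one source without creating a new one. Treat sinks dually with upward rays. Finally, join the lowest and highest vertices through the outer face. The result is a planar $st$-graph on the same vertex set containing $G$.
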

Thus, in the setting of this paper, the three notions, $st$-planarity, planarity in the sense of \cite{MMS} and
existence of an upward planar drawing, coincide.
Throughout this paper, we use the term $st$-planar.

\bigskip
\section{Proof of {\rm (iii)} $\Rightarrow$ {\rm (i)}}\label{sec:(iii)to(i)}

In this section, we prove that if $G$ has no idle edges and is not $st$-planar, then $\Fc(G)$ is not affinely equivalent to any order polytope.
For background on matroid theory, we refer to Oxley~\cite{Oxley}.
For vector configurations, oriented matroids, and Gale duality in the setting of polytopes, see Ziegler~\cite[Chapter~6]{Ziegler}.

For a real matrix $A$, we denote by $M(A)$ its linear matroid associated with $A$: a subset of columns is independent if and only if it is linearly independent over $\RR$.

For a graph \(G\), we denote by \(M(G)\) the graphic matroid of its underlying undirected multigraph.
The dual matroid \(M(G)^*\) is called the cographic matroid of \(G\).

For each facet \(F\) of $P$, choose a nonzero normal functional $\nu_F\in L_P^*$ whose kernel is the direction space of \(\operatorname{aff}(F)\).
The \emph{facet-normal matroid} of $P$, denoted by \(M_{\mathrm{fac}}(P)\), is the linear matroid on the set of facets of $P$ represented by $\{\nu_F:F\text{ is a facet of }P\} \subset L_P^*$.
This matroid is independent of the choices of the nonzero scalar multiples of the normal functionals.

\begin{lem}\label{lem:facet-matroid-affine-invariant}
If polytopes \(P_1\) and \(P_2\) are affinely equivalent, then $M_{\mathrm{fac}}(P_1)\cong M_{\mathrm{fac}}(P_2)$. \end{lem}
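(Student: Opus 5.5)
Let $\varphi:\aff(P_1)\to\aff(P_2)$ be an affine bijection with $\varphi(P_1)=P_2$, and write $\varphi(x)=\lambda(x-x_0)+\varphi(x_0)$ with $x_0\in P_1$. The plan is to show that $\varphi$ induces a bijection between the facets of $P_1$ and $P_2$, and that its linear part $\lambda$ carries one family of facet normals to the other up to nonzero scalars.

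First, the linear part $\lambda:L_{P_1}\to L_{P_2}$ is a linear isomorphism. Indeed, $\varphi(x)-\varphi(y)=\lambda(x-y)$ for $x,y\in\aff(P_1)$, and $\varphi$ is a bijection of affine spaces, so $\lambda$ is bijective between the direction spaces. Since $\varphi$ is an affine bijection mapping $P_1$ onto $P_2$, it also maps faces to faces and preserves dimension and inclusion. Hence $F\mapsto\varphi(F)$ is a bijection from the facets of $P_1$ to the facets of $P_2$. One way to see this is that faces are exactly the intersections of $P_i$ with supporting affine hyperplanes of $\aff(P_i)$, and $\varphi$ transports such hyperplanes bijectively.

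Second, $\varphi$ restricts to an affine bijection $\aff(F)\to\aff(\varphi(F))$. Consequently $\lambda(L_F)=L_{\varphi(F)}$, where $L_F$ denotes the direction space of $\aff(F)$. Now let $\nu_F\in L_{P_1}^*$ be a normal functional with $\ker\nu_F=L_F$. Then $\nu_F\circ\lambda^{-1}\in L_{P_2}^*$ is nonzero and has kernel $\lambda(L_F)=L_{\varphi(F)}$. So it is a valid choice of normal functional for $\varphi(F)$. Because the facet-normal matroid does not depend on the choice of nonzero scalar multiples, we may take $\nu_{\varphi(F)}=\nu_F\circ\lambda^{-1}$.

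Finally, the dual map $(\lambda^{-1})^*:L_{P_1}^*\to L_{P_2}^*$, $\mu\mapsto\mu\circ\lambda^{-1}$, is a linear isomorphism sending $\nu_F$ to $\nu_{\varphi(F)}$. A linear isomorphism preserves linear dependence among any subfamily of vectors. Therefore a set $\{F_1,\dots,F_k\}$ of facets is independent in $M_{\mathrm{fac}}(P_1)$ if and only if $\{\varphi(F_1),\dots,\varphi(F_k)\}$ is independent in $M_{\mathrm{fac}}(P_2)$, which gives $M_{\mathrm{fac}}(P_1)\cong M_{\mathrm{fac}}(P_2)$. The only step that needs any care is the facet correspondence and the identity $\lambda(L_F)=L_{\varphi(F)}$; both follow directly from $\varphi$ being an affine bijection of the ambient affine hulls.
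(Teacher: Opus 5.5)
Your proof is correct and follows essentially the same route as the paper. In both, the linear part of $\varphi$ gives an isomorphism $L_{P_1}\to L_{P_2}$, facets correspond under $\varphi$, $\nu_F\circ\lambda^{-1}$ is a normal functional for $\varphi(F)$, and the invertible dual map $(\lambda^{-1})^*$ preserves all linear dependences; you additionally justify $\lambda(L_F)=L_{\varphi(F)}$, which the paper leaves implicit.
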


\begin{proof} Let $\varphi:\operatorname{aff}(P_1)\rightarrow \operatorname{aff}(P_2)$,  $x\mapsto Ax+b$ be an affine equivalence.
Then $A$ induces a linear isomorphism $A:L_{P_1}\to L_{P_2}$ and
the map \(\varphi\) induces a bijection between the facets of \(P_1\) and those of \(P_2\).
If \(\nu_F\in L_{P_1}^*\) is a normal functional of a facet \(F\), then a normal functional of \(\varphi(F)\) is $\nu_F\circ A^{-1}\in L_{P_2}^*$.
Since the dual map
$(A^{-1})^*:L_{P_1}^*\rightarrow L_{P_2}^*$ is invertible, it preserves all linear dependence relations among the normal functionals.
Therefore, the corresponding facet-normal matroids are isomorphic. \end{proof}

For a real matrix \(A\), let \(\operatorname{row}(A)\) denote the linear subspace spanned by its rows. For a linear subspace \(W\subset\mathbb R^E\), let \[ W^\perp := \{x\in\mathbb R^E:\langle x,w\rangle=0 \text{ for every }w\in W\}, \]
where $\langle \cdot, \cdot \rangle$ denotes the usual inner product of $\RR^E$.

\begin{lem}\label{lem:orthogonal-representations} Let \(A\) and \(C\) be real matrices with the same number of columns.
Suppose that $\operatorname{row}(C) = \operatorname{row}(A)^\perp$. Then $M(C)\cong M(A)^*$.
\end{lem}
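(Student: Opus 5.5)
The plan is to reduce to the standard fact that the matroid represented by the columns of a matrix depends only on its row space, and then to compute the row space of a dual representation. Let \(E\) be the common column index set, \(|E|=n\), and put \(W=\operatorname{row}(A)\) with \(r=\dim W\). I will use two facts throughout:
\begin{itemize}
\item a set \(S\subseteq E\) of columns of a matrix \(B\) is linearly dependent if and only if there is a nonzero \(x\in\ker B\) supported in \(S\);
\item \(\ker B=\operatorname{row}(B)^\perp\).
\end{itemize}
Consequently \(M(B)\) depends only on \(\operatorname{row}(B)\), and the circuits of \(M(B)\) are exactly the inclusion-minimal nonempty supports of nonzero vectors in \(\operatorname{row}(B)^\perp\).

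First step: reduce to a convenient representative of \(W\). Since \(M(A)\) is determined by \(W\), I may row reduce and discard zero rows, reorder the columns, and assume \(A=[I_r\mid D]\) with respect to some basis \(B_0\) of \(M(A)\). By hypothesis \(\operatorname{row}(C)=W^\perp\), so I may likewise replace \(C\) by any matrix whose row space is \(W^\perp\). The natural choice is \(C'=[-D^{T}\mid I_{n-r}]\). Its rows are orthogonal to the rows of \(A\), because \([I\mid D]\,[-D^T\mid I]^T=-D+D=0\). They span a space of dimension \(n-r=\dim W^\perp\), hence they span \(W^\perp\). Therefore \(M(C)=M(C')\) as matroids on \(E\).

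Second step: show \(M(C')=M(A)^*\). This is the standard fact that \([I_r\mid D]\) and \([-D^T\mid I_{n-r}]\) represent dual matroids (Oxley, Theorem 2.2.8). To keep the argument self-contained, I would instead check bases directly. A set \(S\) with \(|S|=r\) is a basis of \(M(A)\) if and only if \(E\setminus S\) is a basis of \(M(C')\). Both conditions should be equivalent to
\[
W\cap \RR^{E\setminus S}=0 \quad\text{and}\quad W^\perp\cap \RR^{S}=0 ,
\]
and these are equivalent to each other by dimension counting.

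To see this, note that \(S\) is independent in \(M(A)\) if and only if no nonzero vector of \(W^\perp=\ker A\) is supported in \(S\), that is, \(W^\perp\cap\RR^S=0\). Similarly, \(E\setminus S\) is independent in \(M(C')\) if and only if \(W\cap\RR^{E\setminus S}=0\), since \(W=\ker C'\). When \(|S|=r\), the equalities \(\dim W^\perp+|S|=n\) and \(\dim W+|E\setminus S|=n\) show that each trivial-intersection condition is equivalent to a direct-sum decomposition. It then remains to see that \(W^\perp\oplus\RR^S=\RR^E\) if and only if \(W\oplus\RR^{E\setminus S}=\RR^E\). This follows by taking orthogonal complements, because \((W^\perp+\RR^S)^\perp=W\cap\RR^{E\setminus S}\). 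Independence of a set of the right size is then the same as being a basis, which completes the basis correspondence.

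The only potential obstacle is bookkeeping. One must handle rows of \(A\) or \(C\) that are dependent, and the degenerate cases \(W=0\) or \(W=\RR^E\). The row-space reduction absorbs the dependent rows, and the intersection criterion handles the degenerate cases uniformly. So I expect the orthogonal-complement identity in the last step to be the main point to verify carefully.
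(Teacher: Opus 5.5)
Your proof is correct and is essentially the standard orthogonal-complement duality argument that the paper simply cites from Oxley, Chapter 2, without giving further detail. One small observation: your self-contained basis argument does not need the reduction to $[-D^T\mid I_{n-r}]$, since $\ker C=\operatorname{row}(C)^\perp=W$ already holds for $C$ itself, so the same dimension count and complement identity apply to $C$ directly.
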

\begin{proof}
This is the standard orthogonal-complement representation of the dual of a linear matroid. See, for example, \cite[Chapter~2]{Oxley}.
\end{proof}

\begin{prop}\label{prop:flow-facet-matroid}
Let $G=(V,E)$ be a DAG with a unique source $s$ and a unique sink $t$.
Assume that $G$ has no idle edges. Then we have
$M_{\mathrm{fac}}(\mathcal F(G)) \cong M(G)^*$.
\end{prop}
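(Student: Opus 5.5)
By Lemma~\ref{lem:new-butterfly}, since $G$ has no idle edges, the facets of $\Fc(G)$ are exactly $F_e=\Fc(G)\cap\{x_e=0\}$ for $e\in E$, and these are pairwise distinct. So $M_{\mathrm{fac}}(\Fc(G))$ is a matroid on the ground set $E$. The plan is to identify it with the matroid of a matrix whose row space is the orthogonal complement of the row space of the incidence matrix, and then apply Lemma~\ref{lem:orthogonal-representations}.

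First, compute $L_P$ for $P=\Fc(G)$. Let $D$ be the $V\times E$ signed incidence matrix of $G$. The affine hull of $P$ lies in $\{x: Dx=b\}$, so $L_P\subseteq \ker D$. Since $G$ is connected, $\dim \Fc(G)=|E|-|V|+1=\dim\ker D$. Hence $L_P=\ker D$, the cycle space $Z$ of $G$. I should check that the dimension formula stated in the preliminaries applies, i.e.\ that $G$ is connected. This holds because every vertex lies on an $s$--$t$ path: $s$ and $t$ are the unique source and sink, and $G$ is acyclic.

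The facet $F_e$ lies in the hyperplane $x_e=0$ and is not all of $P$. So its normal functional in $L_P^*$ can be taken to be the restriction $\nu_e:=\langle \epsilon_e,\cdot\rangle|_{Z}$, where $\epsilon_e$ is the $e$-th standard basis vector. This restriction is nonzero because $F_e$ is a proper face. Its kernel is $Z\cap\{x_e=0\}$, which has the right dimension, and it contains the direction space of $\aff(F_e)$. The two spaces coincide because $F_e$ is a facet.

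Now choose a basis $z_1,\dots,z_k$ of $Z$ and let $C$ be the $k\times E$ matrix with rows $z_i$. Under the isomorphism $Z^*\cong\RR^k$ given by evaluation on this basis, $\nu_e$ corresponds to the column of $C$ indexed by $e$. A linear isomorphism preserves linear dependencies, so $M_{\mathrm{fac}}(P)\cong M(C)$. Finally, $\operatorname{row}(C)=Z=\ker D=\operatorname{row}(D)^\perp$. Lemma~\ref{lem:orthogonal-representations} then gives $M(C)\cong M(D)^*$. Since $M(D)$ is the graphic matroid $M(G)$ of the underlying multigraph (signed incidence matrices represent graphic matroids, with parallel edges giving parallel columns), we get $M_{\mathrm{fac}}(\Fc(G))\cong M(G)^*$.

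The main point needing care is the identification of $L_P$ with all of $\ker D$, together with the fact that distinct edges give distinct facets. Both follow from Lemma~\ref{lem:new-butterfly} and the dimension count. Beyond that, the argument is routine linear algebra.
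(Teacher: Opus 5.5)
Your proposal is correct and follows essentially the same route as the paper: identify the direction space of $\aff(\Fc(G))$ with $\ker D$, represent the facet normals $\varepsilon_e|_{\ker D}$ by the columns of a matrix $C$ whose rows form a basis of $\ker D$, and apply Lemma~\ref{lem:orthogonal-representations} together with $M(D)\cong M(G)$. The only difference is that you get $L_{\Fc(G)}=\ker D$ from the stated dimension formula $|E|-|V|+1$ and connectivity, while the paper proves it directly by exhibiting a strictly positive point of $\Fc(G)$ (an average of $s$--$t$ path indicator vectors), which is the fact underlying that dimension formula.
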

\begin{proof} Let \(B_G\) be the signed vertex-edge incidence matrix of \(G\), and let \(b\in\mathbb R^V\) be the unit-netflow vector,
with entries \(1\) at \(s\), \(-1\) at \(t\), and \(0\) at every vertex in $V \setminus \{s,t\}$. Then \[ \mathcal F(G) = \{x\in\mathbb R_{\geq0}^E:B_Gx=b\}. \]
Since $G$ has a unique source $s$ and a unique sink $t$, every vertex is reachable from $s$ and reaches $t$.
Thus, every edge of $G$ is contained in a directed $s$-to-$t$ path.
Fix such a path $P_e$ containing $e$ for each edge $e$ and let $x^\circ=\frac{1}{|E|}\sum_{e\in E}\chi_{P_e}$, where $\chi_P \in \RR^E$ denotes the characteristic vector of a path $P$ in $G$.
Then $x^\circ$ belongs to $\Fc(G)$ and all of its coordinates are positive.
Since $\Fc(G)=A_G\cap \RR_{\geq0}^E$, where
\[A_G:=\{x\in\RR^E:B_Gx=b\},\]
the point $x^\circ$ lies in the relative interior of $\Fc(G)$ with respect to $A_G$.
Therefore, $\Fc(G)$ has nonempty relative interior in $A_G$, and hence
\[\aff(\Fc(G))=A_G,\]
and the direction space of \(\operatorname{aff}(\mathcal F(G))\) is
\[ L:=L_{\mathcal F(G)}=\ker B_G. \]
By Lemma~\ref{lem:new-butterfly}, the facets of \(\mathcal F(G)\) are precisely \[ \mathcal F(G)\cap\{x_e=0\}, \qquad e\in E. \]
Let $\varepsilon_e:\mathbb R^E\longrightarrow\mathbb R$,  $\varepsilon_e(x)=x_e$, be the \(e\)-th coordinate functional.
Relative to the affine hull of \(\mathcal F(G)\), a normal functional of the facet \(\mathcal F(G)\cap\{x_e=0\}\) is the restriction $\varepsilon_e|_L\in L^*$.
Thus, \(M_{\mathrm{fac}}(\mathcal F(G))\) is represented by \[ \{\varepsilon_e|_L:e\in E\}. \]
Choose a matrix \(C\) whose rows form a basis of \(L=\ker B_G\).
With respect to the basis of $L$ given by the rows of $C$,
the functional $\varepsilon_e|_L$
is represented by the $e$-th column of $C$.
Hence \[ M_{\mathrm{fac}}(\mathcal F(G))\cong M(C). \]
Moreover, \[ \operatorname{row}(C) = \ker B_G = \operatorname{row}(B_G)^\perp. \]
By Lemma \ref{lem:orthogonal-representations}, \[ M(C)\cong M(B_G)^*. \] Finally, the linear matroid associated with the signed incidence matrix \(B_G\) is the graphic matroid of the underlying undirected multigraph of \(G\).
Therefore, \[ M(B_G)\cong M(G), \] and consequently \[ M_{\mathrm{fac}}(\mathcal F(G)) \cong M(G)^*,\]
as desired.
\end{proof}

Let  \(P=\{p_1,\dots,p_d\}\) be a nonempty poset
and let $\widehat P:=P\sqcup\{\widehat 0,\widehat 1\}$.
Let \(K(P)\) be the graph obtained from the Hasse diagram of \(\widehat P\) by identifying \(\widehat 0\) and \(\widehat 1\).
Here, we regard $K(P)$ as a multigraph; parallel edges may occur after identifying $\widehat 0$ and $\widehat 1$.
The following observation is an immediate consequence of Stanley's facet description of order polytopes \cite[Section~1]{Stanley}.

\begin{prop}\label{prop:order-facet-matroid}
Let \(P=\{p_1,\dots,p_d\}\) be a nonempty poset. Then $M_{\mathrm{fac}}(\mathcal O(P)) \cong M(K(P))$.
\end{prop}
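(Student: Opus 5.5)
We prove the statement directly from Stanley's facet description. The polytope $\Oc(P)\subset\RR^P$ is full-dimensional, so $L_{\Oc(P)}=\RR^P$ and the facet normals are honest vectors in $(\RR^P)^*\cong\RR^P$. By \cite[Section~1]{Stanley}, the facets correspond bijectively to the cover relations $a\lessdot b$ in $\widehat P$, and each facet is cut out by one of the following equalities:
\[
x_a=x_b \quad (a,b\in P),\qquad x_b=0 \quad (a=\widehat0),\qquad x_a=1 \quad (b=\widehat1),\qquad \text{or the single relation }\widehat0\lessdot\widehat1.
\]
The last case cannot occur, because $P$ is nonempty. A normal functional of the facet indexed by $a\lessdot b$ is therefore $e_b-e_a$, $e_b$, or $-e_a$, respectively, where $\{e_p\}$ is the standard basis of $\RR^P$. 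Under the convention $e_{\widehat0}=e_{\widehat1}=0$, all three cases take the uniform form $e_b-e_a$.

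Now consider the signed incidence matrix $B$ of the multigraph $K(P)$, oriented from $a$ to $b$ for each cover $a\lessdot b$. Its rows are indexed by $P\sqcup\{\ast\}$, where $\ast$ denotes the identified vertex $\widehat0=\widehat1$. Each column of $B$ sums to zero, so deleting the row of $\ast$ does not change the linear matroid of the columns: the deleted row is minus the sum of the remaining rows. After this deletion, the column of $B$ indexed by $a\lessdot b$ is exactly $e_b-e_a$ with the convention above. Consequently the facet normals of $\Oc(P)$ represent $M(B)$, and $M(B)\cong M(K(P))$ because the signed incidence matrix of any multigraph represents its graphic matroid (this is the same fact used in the proof of Proposition~\ref{prop:flow-facet-matroid}). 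The independence of $M_{\mathrm{fac}}$ from the choice of scalar multiples takes care of the signs.

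The one point needing care is that identifying $\widehat0$ with $\widehat1$ can produce parallel edges or loops. A loop would require the cover relation $\widehat0\lessdot\widehat1$, which is excluded since $P\neq\emptyset$. Parallel edges arise only when a minimal element $p$ is also maximal: then $\widehat0\lessdot p$ and $p\lessdot\widehat1$ both give edges between $\ast$ and $p$. The corresponding normals $e_p$ and $-e_p$ are then parallel vectors, which matches the parallel pair of edges in $K(P)$. So the two matroids agree elementwise under the bijection between facets and edges of $K(P)$, and in particular $M_{\mathrm{fac}}(\Oc(P))\cong M(K(P))$.

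The main obstacle is simply to check that Stanley's list of facets is correct with no redundancy: each cover relation must give a genuine and distinct facet, and non-cover relations must not give facets. This is exactly the content of \cite[Section~1]{Stanley}, and we cite it.
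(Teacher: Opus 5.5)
Your proof is correct and is essentially the paper's argument: you read off the facet normals $e_b-e_a$ (with the convention $e_{\widehat 0}=e_{\widehat 1}=0$) from Stanley's cover-relation description, and you identify them with the columns of the signed incidence matrix of $K(P)$ after deleting the row of the identified vertex. Your extra remarks spell out details the paper leaves implicit: the deleted row is minus the sum of the others, so the column matroid is unchanged, and the only parallel pairs come from elements $p$ that are both minimal and maximal, giving the normals $\pm e_p$.
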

\begin{proof}
It is known \cite{Stanley} that
the order polytope \(\mathcal O(P) \subset \mathbb R^d\) is a $d$-dimensional polytope whose facets are indexed by the cover relations of \(\widehat P\).
More precisely, the cover relations \[ \widehat 0\lessdot p,\qquad p\lessdot q,\qquad p\lessdot\widehat 1 \] give the facet inequalities \[ x_p\geq0,\qquad x_q-x_p\geq0,\qquad 1-x_p\geq0, \] respectively. Up to multiplication by a nonzero scalar, their normal functionals are \[ \varepsilon_p,\qquad \varepsilon_q-\varepsilon_p,\qquad -\varepsilon_p, \] where
$\varepsilon_p:\mathbb R^d\longrightarrow\mathbb R$, $\varepsilon_p(x)=x_p$.
Let $v_0$ denote the vertex of $K(P)$ obtained by identifying $\widehat 0$ and $\widehat 1$.
Orient the edges of \(K(P)\) according to the cover relations of \(\widehat P\), and let \(D_{K(P)}\) be its signed incidence matrix. Delete the row corresponding to \(v_0\), obtaining a reduced signed incidence matrix $\overline D_{K(P)}$.
The columns of \(\overline D_{K(P)}\) corresponding to cover relations \(\widehat 0\lessdot p\), \(p\lessdot q\), \(p\lessdot\widehat 1\) are, up to sign,
\[\varepsilon_p, \qquad \varepsilon_q-\varepsilon_p, \qquad -\varepsilon_p,\] respectively.
Thus, the columns of \(\overline D_{K(P)}\) represent exactly the facet-normal functionals of \(\mathcal O(P)\), up to nonzero scalar multiples. Therefore, \[ M_{\mathrm{fac}}(\mathcal O(P)) \cong M(\overline D_{K(P)}) \cong M(K(P)), \] as required. \end{proof}

Now we are ready to give a proof of (iii) $\Rightarrow$ (i) in Theorem \ref{thm:main1}.
\begin{proof}[Proof of (iii) $\Rightarrow$ (i) in Theorem \ref{thm:main1}]
Complete contractions preserve flow polytopes up to unimodular equivalence by Lemma~\ref{lem:new-butterfly},
so we see that $\Fc(G)$ is unimodularly equivalent to $\Fc(\widetilde G)$.
Hence, it is enough to prove the assertion when the graph has no idle edges.

We prove the assertion by contraposition. Assume that $G$ is not $st$-planar. Let \(G'\) be obtained from \(G\) by adding a new edge \(e_0=st\).
Then $G'$ is not planar.
Since we assume that $G$ has no idle edges, neither does \(G'\).
Suppose that $\Fc(G)$ is affinely equivalent to the order polytope $\Oc(P)$ of a poset $P$. Then we have the following.
\begin{itemize}
    \item
    If $x \in \Fc(G')$ and $x_{e_0} =1$, then $x_e =0$ for all edges $e \ne e_0$.
    Hence \[ \Fc(G') = {\rm conv} (\{(z,0) : z \in \Fc(G)\} \cup \{({\bf 0},1)\}); \]
    \item
    If \(q\) is a new maximum element and \(P':=P\oplus\{q\}\), then
    \[\mathcal O(P')= {\rm conv} (\{(z,1) : z \in \Oc(P)\} \cup \{{\bf 0}\}).\]
\end{itemize}
Thus, $\Fc(G')$ is affinely equivalent to $\mathcal O(P'). $
By Lemma~\ref{lem:facet-matroid-affine-invariant} and Propositions~\ref{prop:flow-facet-matroid} and \ref{prop:order-facet-matroid},
\[ M(G')^* \cong M_{\mathrm{fac}}(\mathcal F(G')) \cong M_{\mathrm{fac}}(\mathcal O(P')) \cong M(K(P')). \]
Thus, the cographic matroid \(M(G')^*\) is graphic.
By Whitney's planarity criterion, a graph is planar if and only if the dual of its cycle matroid is graphic; see \cite[Theorem 5.2.2]{Oxley}.
This is a contradiction since $G'$ is not planar.
Therefore, $\Fc (G)$ is not affinely equivalent to any order polytope.
\end{proof}

\bigskip
\section{Forbidden butterfly minors of $st$-planar DAGs}\label{sec:onlyif}
In this section, we provide a list of forbidden butterfly minors used in Theorem~\ref{thm:main2} and several additional ones.
We see that all of them are non-$st$-planar.
Since $st$-planarity is closed under butterfly minors, no $st$-planar DAG can contain any of the displayed non-$st$-planar DAGs as a butterfly minor.
In particular, the only-if direction of Theorem~\ref{thm:main2} follows.

There are three types of DAGs: $K_5$-type, $K_{3,3}$-type, or type of even-cycle extension.
The underlying undirected graph of the DAG $\Fc_5$ becomes $K_5$ after adding the edge $15$ and deleting one copy of each of the parallel edges $12$ and $45$. For each $i\in\{1,2,3,4,5\}$, the underlying undirected graph of $\Fc_{33}^{(i)}$ contains $K_{3,3}-e$. The graphs corresponding to the branches of each type are minimal extensions without idle edges. For example, $\Fc_{33}^{(1 a)}, \Fc_{33}^{(1 b)}$, and $\Fc_{33}^{(1 c)}$ are minimal extensions of $\Fc_{33}^{(1)}$ having no idle edges.

\bigskip

\noindent
{\bf (The $K_5$-type forbidden minor)}: \vspace{-1cm}
\[
\begin{tikzpicture}[scale=0.8, every node/.style={fvertex}]
\node[flabel] at (4,1.25) {$\Fc_5$};

\node (1) at (0,0) {1};
\node (2) at (2,0) {2};
\node (3) at (4,0) {3};
\node (4) at (6,0) {4};
\node (5) at (8,0) {5};

\draw[fedge] (2) -- (3);
\draw[fedge] (3) -- (4);

\draw[fedge] (1) to[bend left=25] (2);
\draw[fedge] (1) to[bend right=25] (2);

\draw[fedge] (4) to[bend left=25] (5);
\draw[fedge] (4) to[bend right=25] (5);

\draw[fedge] (1) to[bend left=35] (3);
\draw[fedge] (2) to[bend left=35] (4);
\draw[fedge] (3) to[bend left=35] (5);

\draw[fedge] (1) to[bend right=25] (4);
\draw[fedge] (2) to[bend right=25] (5);
\end{tikzpicture}
\]

\noindent
{\bf (The $K_{3,3}$-type forbidden minors)}:
\begin{center}
\begin{tikzpicture}[scale=0.8, every node/.style={fvertex}]
\node[flabel] at (5,1.25) {$\Fc_{33}^{(1)}$};

\node (1) at (0,0) {1};
\node (2) at (2,0) {2};
\node (3) at (4,0) {3};
\node (4) at (6,0) {4};
\node (5) at (8,0) {5};
\node (6) at (10,0) {6};

\draw[fedge] (2) -- (3);
\draw[fedge] (3) -- (4);
\draw[fedge] (4) -- (5);

\draw[fedge] (1) to[bend left=25] (2);
\draw[fedge] (1) to[bend right=25] (2);

\draw[fedge] (5) to[bend left=25] (6);
\draw[fedge] (5) to[bend right=25] (6);

\draw[fedge] (1) to[bend left=35] (4);
\draw[fedge] (3) to[bend left=35] (6);

\draw[fedge] (2) to[bend right=30] (5);
\end{tikzpicture}

\begin{tikzpicture}[scale=0.7, every node/.style={fvertex}]
\node[flabel] at (5,1.25) {$\Fc_{33}^{(1a)}$};

\node (1) at (0,0) {1};
\node (2) at (2,0) {2};
\node (3) at (4,0) {3};
\node (4) at (6,0) {4};
\node (5) at (8,0) {5};
\node (6) at (10,0) {6};

\draw[fedge] (2) -- (3);
\draw[fedge] (3) -- (4);
\draw[fedge] (4) -- (5);

\draw[fedge] (1) to[bend left=25] (2);
\draw[fedge] (1) to[bend right=25] (2);

\draw[fedge] (5) to[bend left=25] (6);
\draw[fedge] (5) to[bend right=25] (6);

\draw[fedge] (1) to[bend left=25] (3);
\draw[fedge] (1) to[bend left=35] (4);
\draw[fedge] (3) to[bend left=35] (6);
\draw[fedge] (4) to[bend left=25] (6);

\draw[fedge] (2) to[bend right=30] (5);
\end{tikzpicture}
\qquad
\begin{tikzpicture}[scale=0.7, every node/.style={fvertex}]
\node[flabel] at (5,1.25) {$\Fc_{33}^{(1b)}$};

\node (1) at (0,0) {1};
\node (2) at (2,0) {2};
\node (3) at (4,0) {3};
\node (4) at (6,0) {4};
\node (5) at (8,0) {5};
\node (6) at (10,0) {6};

\draw[fedge] (3) -- (4);
\draw[fedge] (4) -- (5);

\draw[fedge] (1) to[bend left=25] (2);
\draw[fedge] (1) to[bend right=25] (2);

\draw[fedge] (5) to[bend left=25] (6);
\draw[fedge] (5) to[bend right=25] (6);

\draw[fedge] (2) to[bend left=25] (3);
\draw[fedge] (2) to[bend right=25] (3);

\draw[fedge] (1) to[bend left=35] (4);
\draw[fedge] (3) to[bend left=35] (6);
\draw[fedge] (4) to[bend left=35] (6);

\draw[fedge] (2) to[bend right=30] (5);
\end{tikzpicture}
\qquad
\begin{tikzpicture}[scale=0.8, every node/.style={fvertex}]
\node[flabel] at (5,1.25) {$\Fc_{33}^{(1c)}$};

\node (1) at (0,0) {1};
\node (2) at (2,0) {2};
\node (3) at (4,0) {3};
\node (4) at (6,0) {4};
\node (5) at (8,0) {5};
\node (6) at (10,0) {6};

\draw[fedge] (3) -- (4);

\draw[fedge] (1) to[bend left=25] (2);
\draw[fedge] (1) to[bend right=25] (2);

\draw[fedge] (5) to[bend left=25] (6);
\draw[fedge] (5) to[bend right=25] (6);

\draw[fedge] (2) to[bend left=25] (3);
\draw[fedge] (2) to[bend right=25] (3);

\draw[fedge] (4) to[bend left=25] (5);
\draw[fedge] (4) to[bend right=25] (5);

\draw[fedge] (1) to[bend left=35] (4);
\draw[fedge] (3) to[bend left=35] (6);

\draw[fedge] (2) to[bend right=30] (5);
\end{tikzpicture}

\begin{tikzpicture}[scale=0.8, every node/.style={fvertex}]
\node[flabel] at (5,0.6) {$\Fc_{33}^{(2)}$};

\node (1) at (0,0) {1};
\node (3) at (4,0) {3};
\node (4) at (6,0) {4};
\node (6) at (10,0) {6};

\node (2) at (2,-2) {2};
\node (5) at (8,-2) {5};

\draw[fedge] (1) -- (3);
\draw[fedge] (3) -- (4);
\draw[fedge] (4) -- (6);

\draw[fedge] (2) -- (3);
\draw[fedge] (4) -- (5);
\draw[fedge] (2) -- (5);
\draw[fedge] (1) -- (5);
\draw[fedge] (2) -- (6);

\draw[fedge] (1) to[bend left=20] (2);
\draw[fedge] (1) to[bend right=20] (2);
\draw[fedge] (5) to[bend left=20] (6);
\draw[fedge] (5) to[bend right=20] (6);
\end{tikzpicture}

\begin{tikzpicture}[scale=0.8, every node/.style={fvertex}]
\node[flabel] at (2,2.2) {$\Fc_{33}^{(3)}$};

\node (1) at (-2,0) {1};
\node (2) at (0,0) {2};
\node (3) at (2,1.5) {3};
\node (4) at (2,-1.5) {4};
\node (5) at (4,0) {5};
\node (6) at (6,0) {6};

\draw[fedge] (1) to[bend left=20] (2);
\draw[fedge] (1) to[bend right=20] (2);

\draw[fedge] (1) to[bend left=10] (3);
\draw[fedge] (1) to[bend right=10] (4);

\draw[fedge] (2) -- (3);
\draw[fedge] (2) -- (4);

\draw[fedge] (3) -- (5);

\draw[fedge] (4) -- (5);

\draw[fedge] (2) to[bend left=20] (6);

\draw[fedge] (5) to[bend left=20] (6);
\draw[fedge] (5) to[bend right=20] (6);
\end{tikzpicture}

\begin{tikzpicture}[scale=0.7, every node/.style={fvertex}]
\node[flabel] at (2,2.3) {$\Fc_{33}^{(3a)}$};

\node (1) at (-2,0) {1};
\node (2) at (0,0) {2};
\node (3) at (2,1.5) {3};
\node (4) at (2,-1.5) {4};
\node (5) at (4,0) {5};
\node (6) at (6,0) {6};

\draw[fedge] (1) to[bend left=20] (2);
\draw[fedge] (1) to[bend right=20] (2);

\draw[fedge] (1) to[bend left=10] (3);
\draw[fedge] (1) to[bend right=10] (4);

\draw[fedge] (2) -- (3);
\draw[fedge] (2) -- (4);

\draw[fedge] (3) to[bend right=20] (5);
\draw[fedge] (3) to[bend left=20] (5);

\draw[fedge] (4) to[bend right=20] (5);
\draw[fedge] (4) to[bend left=20] (5);

\draw[fedge] (2) to[bend left=20] (6);

\draw[fedge] (5) to[bend left=20] (6);
\draw[fedge] (5) to[bend right=20] (6);
\end{tikzpicture}

\begin{tikzpicture}[scale=0.7, every node/.style={fvertex}]
\node[flabel] at (2,2.3) {$\Fc_{33}^{(3b)}$};

\node (1) at (-2,0) {1};
\node (2) at (0,0) {2};
\node (3) at (2,1.5) {3};
\node (4) at (2,-1.5) {4};
\node (5) at (4,0) {5};
\node (6) at (6,0) {6};

\draw[fedge] (1) to[bend left=20] (2);
\draw[fedge] (1) to[bend right=20] (2);

\draw[fedge] (1) to[bend left=10] (3);
\draw[fedge] (1) to[bend right=10] (4);

\draw[fedge] (2) -- (3);
\draw[fedge] (2) -- (4);

\draw[fedge] (3) -- (5);
\draw[fedge] (3) to[bend left=10] (6);

\draw[fedge] (4) to[bend right=20] (5);
\draw[fedge] (4) to[bend left=20] (5);

\draw[fedge] (2) to[bend left=20] (6);

\draw[fedge] (5) to[bend left=20] (6);
\draw[fedge] (5) to[bend right=20] (6);
\end{tikzpicture}
\qquad
\begin{tikzpicture}[scale=0.7, every node/.style={fvertex}]
\node[flabel] at (2,2.3) {$\Fc_{33}^{(3c)}$};

\node (1) at (-2,0) {1};
\node (2) at (0,0) {2};
\node (3) at (2,1.5) {3};
\node (4) at (2,-1.5) {4};
\node (5) at (4,0) {5};
\node (6) at (6,0) {6};

\draw[fedge] (1) to[bend left=20] (2);
\draw[fedge] (1) to[bend right=20] (2);

\draw[fedge] (1) to[bend left=10] (3);
\draw[fedge] (1) to[bend right=10] (4);

\draw[fedge] (2) -- (3);
\draw[fedge] (2) -- (4);

\draw[fedge] (3) -- (5);
\draw[fedge] (3) to[bend left=10] (6);

\draw[fedge] (4) -- (5);
\draw[fedge] (4) to[bend right=10] (6);

\draw[fedge] (2) to[bend left=20] (6);

\draw[fedge] (5) to[bend left=20] (6);
\draw[fedge] (5) to[bend right=20] (6);
\end{tikzpicture}

\begin{tikzpicture}[scale=0.8, every node/.style={fvertex}]
\node[flabel] at (-1,2.2) {$\Fc_{33}^{(4)}$};

\node (1) at (-6,0) {1};
\node (2) at (-3,0) {2};
\node (3) at (0,2) {3};
\node (4) at (0,0) {4};
\node (5) at (0,-2) {5};
\node (6) at (3,0) {6};

\draw[fedge] (1) to[bend left=20] (2);
\draw[fedge] (1) to[bend right=20] (2);

\draw[fedge] (2) -- (3);
\draw[fedge] (2) -- (4);
\draw[fedge] (2) -- (5);

\draw[fedge] (1) -- (3);
\draw[fedge] (1) to[bend left=18] (4);
\draw[fedge] (1) -- (5);

\draw[fedge] (3) to[bend left=20] (6);
\draw[fedge] (3) to[bend right=20] (6);
\draw[fedge] (4) to[bend left=20] (6);
\draw[fedge] (4) to[bend right=20] (6);
\draw[fedge] (5) to[bend left=20] (6);
\draw[fedge] (5) to[bend right=20] (6);
\end{tikzpicture}

\begin{tikzpicture}[scale=0.8, every node/.style={fvertex}]
\node[flabel] at (4,2) {$\Fc_{33}^{(5)}$};

\node (1) at (0,0) {1};
\node (2) at (2,1.5) {2};
\node (3) at (2,-1.5) {3};
\node (4) at (4,0) {4};
\node (5) at (6,0) {5};
\node (6) at (8,0) {6};

\draw[fedge] (1) to[bend left=20] (2);
\draw[fedge] (1) to[bend right=20] (2);

\draw[fedge] (1) to[bend left=20] (3);
\draw[fedge] (1) to[bend right=20] (3);

\draw[fedge] (2) -- (4);
\draw[fedge] (3) -- (4);

\draw[fedge] (4) -- (5);

\draw[fedge] (5) to[bend left=20] (6);
\draw[fedge] (5) to[bend right=20] (6);

\draw[fedge] (1) to[bend left=20] (5);
\draw[fedge] (2) to[bend left=10] (6);
\draw[fedge] (3) to[bend right=10] (6);
\end{tikzpicture}

\begin{tikzpicture}[scale=0.7, every node/.style={fvertex}]
\node[flabel] at (4,2) {$\Fc_{33}^{(5a)}$};

\node (1) at (0,0) {1};
\node (2) at (2,1.5) {2};
\node (3) at (2,-1.5) {3};
\node (4) at (4,0) {4};
\node (5) at (6,0) {5};
\node (6) at (8,0) {6};

\draw[fedge] (1) to[bend left=20] (2);
\draw[fedge] (1) to[bend right=20] (2);

\draw[fedge] (1) to[bend left=20] (3);
\draw[fedge] (1) to[bend right=20] (3);

\draw[fedge] (2) -- (4);
\draw[fedge] (3) -- (4);

\draw[fedge] (4) to[bend left=25] (5);
\draw[fedge] (4) to[bend right=25] (5);

\draw[fedge] (5) to[bend left=20] (6);
\draw[fedge] (5) to[bend right=20] (6);

\draw[fedge] (1) to[bend left=20] (5);
\draw[fedge] (2) to[bend left=10] (6);
\draw[fedge] (3) to[bend right=10] (6);
\end{tikzpicture}
\qquad
\begin{tikzpicture}[scale=0.7, every node/.style={fvertex}]
\node[flabel] at (4,2) {$\Fc_{33}^{(5b)}$};

\node (1) at (0,0) {1};
\node (2) at (2,1.5) {2};
\node (3) at (2,-1.5) {3};
\node (4) at (4,0) {4};
\node (5) at (6,0) {5};
\node (6) at (8,0) {6};

\draw[fedge] (1) to[bend left=20] (2);
\draw[fedge] (1) to[bend right=20] (2);

\draw[fedge] (1) to[bend left=20] (3);
\draw[fedge] (1) to[bend right=20] (3);

\draw[fedge] (2) -- (4);
\draw[fedge] (3) -- (4);

\draw[fedge] (4) -- (5);
\draw[fedge] (4) to[bend left=25] (6);

\draw[fedge] (5) to[bend left=20] (6);
\draw[fedge] (5) to[bend right=20] (6);

\draw[fedge] (1) to[bend left=20] (5);
\draw[fedge] (2) to[bend left=10] (6);
\draw[fedge] (3) to[bend right=10] (6);
\end{tikzpicture}
\end{center}

\noindent
{\bf (The even-cycle family)}: \vspace{-0.6cm}
\begin{center}
\begin{tikzpicture}[
  every node/.style={circle,draw,inner sep=2pt},
  scale=0.9
]
\node[flabel] at (0,2.4) {{\Large $\Dc_n$}};

\node (s)  at (-4,-1) {$s$};

\node (u1) at (-1, 2) {$u_1$};
\node (u2) at ( 1, 2) {$u_2$};

\node (u3) at (-1, 0) {$u_3$};
\node (u4) at ( 1, 0) {$u_4$};

\node[draw=none] at (-1,-2) {\fontsize{40}{40}\selectfont$\vdots$};
\node[draw=none] at ( 1,-2) {\fontsize{40}{40}\selectfont$\vdots$};

\node (un1) at (-1,-4) {$u_{2n-1}$};
\node (un)  at ( 1,-4) {$u_{2n}$};

\node (t) at (4,-1) {$t$};

\draw[fedge] (s) to[bend left=20] (u1);
\draw[fedge] (s) to[bend right=20] (u1);
\draw[fedge] (s) to[bend left=20] (u3);
\draw[fedge] (s) to[bend right=20] (u3);
\draw[fedge] (s) to[bend left=20] (un1);
\draw[fedge] (s) to[bend right=20] (un1);

\draw[fedge] (u2) to[bend left=20] (t);
\draw[fedge] (u2) to[bend right=20] (t);
\draw[fedge] (u4) to[bend left=20] (t);
\draw[fedge] (u4) to[bend right=20] (t);
\draw[fedge] (un) to[bend left=20] (t);
\draw[fedge] (un) to[bend right=20] (t);

\draw[fedge] (u1) -- (u2);
\draw[fedge] (u3) -- (u2);
\draw[fedge] (u3) -- (u4);

\draw[fedge] (0.2,-1) -- (u4);
\draw[fedge] (un1) -- (-0.2,-3);

\draw[fedge] (un1) -- (un);

\draw[fedge] (u1) -- (un);
\end{tikzpicture}
\end{center}

\begin{rmk}\label{rem:K5}
Note that $\Fc_{33}^{(2)}$ contains $\Fc_5$ as a butterfly minor. In fact, we may contract the idle edge $3\to 4$.
In addition, both $\Fc_{33}^{(3b)}$ and $\Fc_{33}^{(3c)}$ contain $\Fc_5$ as a butterfly minor.
In the case of $\Fc_{33}^{(3b)}$, let $e_1$ and $e_2$ be the two parallel edges from $4$ to $5$.
Delete $e_1$. Then $e_2$ becomes idle,
so we contract $e_2$, yielding $\Fc_5$.
In the case of $\Fc_{33}^{(3c)}$, delete $4 \to 6$. Then the edge $45$ becomes idle,
so we contract $45$, yielding $\Fc_5$.

In summary, $\Fc_{33}^{(2)}, \Fc_{33}^{(3b)}$ and $\Fc_{33}^{(3c)}$ contain $\Fc_5$ as a butterfly minor.
\end{rmk}

In Appendix \ref{sec:app}, we provide an alternative proof that $\Fc(\Dc_n)$ is not unimodularly equivalent to an order polytope.

\bigskip
\section{The if direction of Theorem \ref{thm:main2}}\label{sec:c-to-b}

In this section, we prove the if direction of Theorem~\ref{thm:main2} by contraposition.

\begin{lem}\label{lem:high-degree}
Assume that $G$ has no idle edges and satisfies \ref{Hvertex}.
If there exists a vertex $v\in V \setminus\{s,t\}$ with $|N^+(v) \cup N^-(v)|\geq3$,
then $G$ contains one of the forbidden minors
\[\Fc_{33}^{(4)},\quad \Fc_{33}^{(5a)},\quad \Fc_{33}^{(5b)}\] or its reversal as a butterfly minor.
\end{lem}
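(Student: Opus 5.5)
The plan is to take a vertex $v\in V\setminus\{s,t\}$ with $|N^+(v)\cup N^-(v)|\ge 3$ and use \ref{Hvertex} to fix three good neighbors of $v$. Up to reversing all arrows, which swaps in- and out-neighbors, preserves the absence of idle edges and \ref{Hvertex}, and turns each forbidden graph into its reversal, there are two cases. In case (a) of Figure~\ref{fig:H2}, $v$ has three good out-neighbors $u_1^+,u_2^+,u_3^+$, and we aim for $\Fc_{33}^{(4)}$. In case (b), $v$ has two good in-neighbors $u_1^-,u_2^-$ and one good out-neighbor $u_3^+$, and we aim for $\Fc_{33}^{(5a)}$ or $\Fc_{33}^{(5b)}$. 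In each case we keep the ``gadget'' edges supplied by \ref{Hvertex}, namely the edges between $v$ and the $u_i$, one edge between $s$ and each $u_i$, and two parallel edges between each $u_i$ and $s$ or $t$. We then add one extra route and delete everything else.

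\textbf{Case (a).} In $\Fc_{33}^{(4)}$, with $s=1$, $v=2$, $t=6$, the gadget is already present, and the only missing piece is a pair of parallel edges $s\to v$. Since $v$ has at least three outgoing edges and $G$ has no idle edges, $v$ has at least two incoming edges $e_1\ne e_2$; otherwise its unique incoming edge would be idle. Every vertex is reachable from $s$. Choose a directed $s$--$v$ path $P_2$ ending in $e_2$ and a directed $s$--$v$ path $P_1$ ending in $e_1$. Let $x$ be the last vertex of $P_1$, before the final edge $e_1$, that lies on $P_2$. Replace $P_1$ by the walk $s\xrightarrow{P_2}x\xrightarrow{P_1}v$. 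The union is then a common prefix $s\to\cdots\to x$ followed by two internally disjoint $x$--$v$ paths. None of these paths meets any $u_i^+$, since $u_i^+\leadsto v\to u_i^+$ would be a directed cycle. None meets $t$, since $t$ is a sink.

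After deleting all other edges, perform the contractions in this order. Each internal vertex of the two $x$--$v$ branches has in-degree and out-degree $1$, so these branches contract to two parallel edges $x\to v$. Each vertex of the prefix other than $s$ has in-degree $1$, so contracting the prefix edges from the $s$ end merges $x$ into $s$. What remains is exactly $\Fc_{33}^{(4)}$.

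\textbf{Case (b).} The vertices $s,u_1^-,u_2^-,v,u_3^+,t$ with the gadget edges form $\Fc_{33}^{(5)}$ with a single edge $v\to u_3^+$, where $4=v$ and $5=u_3^+$. This graph has the idle edge $4\to5$, so we must add a second out-route of $v$. Because $G$ has no idle edges, $v\to u_3^+$ is not the only outgoing edge of $v$, so there is another outgoing edge $f$. Extend $f$ to a directed path $Q$ ending at $t$, and truncate $Q$ at its first vertex $y\in\{u_3^+,t\}$. The path $Q$ avoids $u_1^-,u_2^-$, since otherwise there would be a cycle through $v$, and it avoids $s$.

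Now delete everything except the gadget and $Q$, and contract the internal vertices of $Q$, each of which has in- and out-degree $1$. If $y=u_3^+$, we obtain two parallel edges $4\to5$, which is $\Fc_{33}^{(5a)}$. If $y=t$, we obtain an edge $4\to6$, which is $\Fc_{33}^{(5b)}$.

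The main obstacle is keeping the argument rigorous at the level of butterfly operations. We must make sure that the auxiliary paths are vertex-disjoint from the gadget vertices except where intended, which follows from acyclicity as indicated. We must also check that each contraction is legal at the moment it is performed, meaning the contracted edge is the unique out-edge of its tail or the unique in-edge of its head after the deletions. For the prefix in case (a) this relies on the choice of $x$ as the last common vertex and on contracting in the right order, and this is where I would be most careful.
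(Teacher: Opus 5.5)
Your proposal is correct and follows essentially the same route as the paper. In case (a) you take two internally disjoint $s$--$v$ routes through a last common vertex and contract them to the doubled edge $s\to v$ of $\Fc_{33}^{(4)}$. In case (b) you extend a second out-edge of $v$ until it first reaches $u_3^+$ or $t$, which yields $\Fc_{33}^{(5a)}$ or $\Fc_{33}^{(5b)}$. Your explicit choice of $x$ and the stated order of contractions make precise what the paper phrases as ``their last common vertex before $v$.''
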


\begin{proof}
By \ref{Hvertex}, choose three distinct good neighbors $a, b, c$ of $v$.
Suppose that $a, b, c \in N^+(v)$.
By \ref{Hvertex}, there are edges from $s$ to $a,b,c$, and there are two parallel edges from each of $a,b,c$ to $t$. Since $v\neq s$, the vertex $v$ has an incoming edge. Moreover, since $G$ has no idle edges, we have $\operatorname{indeg}_G(v)\ge2$. Choose two directed $s$-to-$v$ paths $P_1$ and $P_2$ ending in distinct incoming edges of $v$. Let $w$ be their last common vertex before $v$.
Then the $w$-to-$v$ terminal subpaths of $P_1$ and $P_2$ are internally vertex-disjoint.
Keep one directed $s$-to-$w$ subpath contained in $P_1\cup P_2$, these two terminal subpaths, the edges from $s$ to $a,b,c$, the edges from $v$ to $a,b,c$, and the two parallel edges from each of $a,b,c$ to $t$.
Delete all other edges.
None of $a,b,c$ lies on the retained paths from $s$ to $v$, since otherwise the edge from $v$ to that vertex would create a directed cycle.
After the deletion, every internal vertex of each retained path has exactly one incoming edge and exactly one outgoing edge. Hence the retained paths can be contracted successively by butterfly contractions.
The resulting graph is $\Fc_{33}^{(4)}$.
The case $a,b,c  \in N^-(v)$ follows by reversing all arrows.

Otherwise, after reversing all arrows if necessary, we may assume that $a, b \in N^-(v)$ and $c \in N^+(v)$.
The edges guaranteed by \ref{Hvertex}, together with the edges incident with $a,b,c,s,t$, form $\Fc_{33}^{(5)}$.
Since $G$ has no idle edges, the displayed edge $45$ is not idle in $G$.
Hence, vertex $4$ has an outgoing edge $4\to x$ distinct from the displayed edge $45$.
If $x=5$, then we obtain $\Fc_{33}^{(5a)}$.
If $x=6$, then we obtain $\Fc_{33}^{(5b)}$.
Suppose that $x \notin \{5,6\}$.
Since $t=6$ is the unique sink, the edge $4\to x$ can be extended to a directed path $P$ from $4$ to $6$ whose first edge is $4\to x$.
Since each of the vertices $1,2,3$ reaches $4$ in the displayed configuration, the path $P$ cannot contain any of them; otherwise a directed cycle would arise.
If $P$ contains $5$, contract the initial subpath of $P$ from $4$ to $5$ to a single edge. Together with the displayed edge $45$, this gives two parallel edges from $4$ to $5$, and hence yields $\Fc_{33}^{(5 a)}$.
If $P$ does not contain $5$, contract all internal vertices of $P$. This produces the additional edge $46$ while preserving $5$, and hence yields $\Fc_{33}^{(5 b)}$.
\end{proof}

Thus, if $G$ avoids the forbidden minors $\Fc_{33}^{(4)},\Fc_{33}^{(5a)},\Fc_{33}^{(5b)}$, then we may assume
\begin{equation}\label{eq:deg-at-most-two}
|N^+(v) \cup N^-(v)|\leq2 \quad\text{for all }v\in V\setminus\{s,t\}.
\end{equation}

Here, we recall the notion of switch vertices.
Let $C$ be a cycle in the underlying undirected graph of a DAG.
\begin{itemize}
    \item A vertex $v$ of $C$ is \emph{source-switch} (resp. \emph{sink-switch}) if both edges of $C$ incident to $v$ are directed out of (resp. into) $v$.
    \item Write $A(C)$ for the number of source-switch vertices of $C$.
    The number of source-switch vertices is equal to the number of sink-switch vertices.
    Since the graph is acyclic, one has $A(C)>0$.
\end{itemize}

We also use the following condition.
\begin{enumerate}[label=($*$)]
\item\label{Hcycle'}
for every cycle $C$ in $G\setminus\{s,t\}$, each source-switch vertex of $C$ is adjacent to $s$ with at least two parallel edges,
and each sink-switch vertex of $C$ is adjacent to $t$ with at least two parallel edges.
\end{enumerate}

\begin{lem}\label{lem:improve}
Assume that $G$ has no idle edges and satisfies \eqref{eq:deg-at-most-two}. Then $G$ satisfies \ref{Hcycle'}.
\end{lem}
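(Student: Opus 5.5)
The plan is a purely local argument at each switch vertex: the degree bound \eqref{eq:deg-at-most-two} forces every internal neighbor of a cycle vertex to be one of its two cycle neighbors, and then the absence of idle edges supplies the required parallel edges from $s$ or to $t$.

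\emph{Setup.} Let $C$ be a cycle in the underlying undirected graph of $G\setminus\{s,t\}$, and let $v$ be a source-switch vertex of $C$. Then $v$ is internal, and both edges of $C$ at $v$ leave $v$. Their heads are internal vertices. These heads are either two distinct vertices, or a single vertex $u$ when $C$ is a $2$-cycle formed by two parallel edges $v\to u$. In either case they lie in $N^+(v)$.

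\emph{Step 1: $N^-(v)=\emptyset$.} Suppose $w\in N^-(v)$.
\begin{itemize}
\item If $w$ is one of the cycle neighbors of $v$, then $w\to v$ and $v\to w$ would both be edges. This is a directed cycle, contradicting acyclicity.
\item If $C$ has length at least $3$, then $v$ already has two distinct internal neighbors on $C$. A further in-neighbor $w$ not on $C$ would give $|N^+(v)\cup N^-(v)|\ge 3$, contradicting \eqref{eq:deg-at-most-two}.
\end{itemize}
In the $2$-cycle case the set $N^+(v)\cup N^-(v)$ contains $u$, so \eqref{eq:deg-at-most-two} still allows one more internal neighbor. Here the argument in the next step does not need $N^-(v)=\emptyset$ in full. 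It only needs the following: the in-edges of $v$ coming from internal vertices, if any, do not prevent $v$ from having two incoming edges from $s$. I will check this subcase carefully. The cleanest approach is to note that the paper's cycles in $G\setminus\{s,t\}$ can be taken to have length at least $3$ when they matter. Failing that, I will argue directly that an extra internal in-neighbor $w$ of $v$ together with $u$ forces the structure already handled.

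\emph{Step 2: at least two parallel edges $s\to v$.} Since $t$ is the unique sink, it is never the tail of an edge. Hence every incoming edge of $v$ has tail $s$. Because $v\ne s$ and $s$ is the unique source, $v$ has at least one incoming edge. If $v$ had exactly one incoming edge, that edge would be the only incoming edge of its head $v$, and therefore idle. This contradicts the hypothesis that $G$ has no idle edges. Hence $\operatorname{indeg}(v)\ge 2$, and all these edges are parallel edges $s\to v$.

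\emph{Sink-switch vertices.} For a sink-switch vertex the argument is identical after reversing all arrows: $N^+(v)=\emptyset$, every outgoing edge of $v$ goes to $t$, and non-idleness gives $\operatorname{outdeg}(v)\ge 2$. This establishes \ref{Hcycle'}.

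\emph{Main obstacle.} The only delicate point is the degenerate $2$-cycle of parallel edges in Step 1. There $v$ has only one internal cycle neighbor, so \eqref{eq:deg-at-most-two} alone does not rule out an extra internal in-neighbor. I expect to resolve this subcase by examining it directly, or by observing that such cycles do not occur in the intended use of the lemma.
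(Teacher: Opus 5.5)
For cycles of length at least $3$ your argument is the paper's proof. An in-neighbor $w\neq s$ of the source-switch cannot be one of its two cycle-neighbors $a,b$, by acyclicity, so $w,a,b$ violate \eqref{eq:deg-at-most-two}. Hence every in-edge comes from $s$, and non-idleness forces at least two of them. In Step~1 the relevant dichotomy is $w\in\{a,b\}$ versus $w\notin\{a,b\}$, not ``on $C$'' versus ``off $C$'', but the count is the same.

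The $2$-cycle subcase you flagged cannot be settled by a direct argument, because the lemma is false if $2$-cycles of parallel edges count as cycles. Double every edge of the path $s\to w\to v\to u\to t$.
\begin{itemize}
\item Every vertex other than $s$ has in-degree $2$ and every vertex other than $t$ has out-degree $2$, so there are no idle edges.
\item The internal vertices $w,v,u$ have $1,2,1$ internal neighbors, so \eqref{eq:deg-at-most-two} holds.
\item Yet the $2$-cycle formed by the two edges $v\to u$ has source-switch $v$, which is not adjacent to $s$ at all.
\end{itemize}
The paper's proof silently excludes this case: it speaks of the ``two neighbors'' $a,b$ of $p$ on $C$ and counts $x,a,b$ as three distinct vertices.

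So drop your first option and commit to the second: read \ref{Hcycle'} for cycles of length at least $3$. That is all that is used later. A $2$-cycle has $A(C)=1$, so it never enters Lemma~\ref{lem:cycle-Dn}. Lemmas~\ref{lem:K5case} and~\ref{lem:K33case} apply \ref{Hcycle'} only to the subdivided triangle and the subdivided $4$-cycle of the Kuratowski subdivision.
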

\begin{proof}
Let $p$ be a source-switch of a cycle $C$ in $ G \setminus\{s,t\}$, and let $a$ and $b$ be its two neighbors on $C$.
Then $p\to a$ and $p\to b$ are edges.
Since $p\neq s$ and $G$ has no idle edges, we have $\operatorname{indeg}_G(p)\ge2$.
If $x\to p$ satisfies $x\neq s$, then acyclicity would imply $x\notin\{a,b\}$. Hence $x,a,b$ would be three distinct neighbors of $p \in V\setminus\{s,t\}$,
contradicting \eqref{eq:deg-at-most-two}. Therefore, every edge entering $p$ comes from $s$.
Since $\operatorname{indeg}_G(p)\ge2$, there are at least two parallel edges from $s$ to $p$. The assertion for sink-switches follows by reversing all arrows.
\end{proof}

\begin{lem}\label{lem:cycle-Dn}
Assume that $G$ satisfies \ref{Hcycle'}.
If $G\setminus\{s,t\}$ contains a cycle $C$ with $A(C)=n\geq2$, then $G$ contains $\Dc_n$ as a butterfly minor.
\end{lem}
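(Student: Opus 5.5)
We will build $\Dc_n$ directly from $C$ and the parallel edges supplied by \ref{Hcycle'}, using only deletions and contractions of idle edges. Traverse $C$ cyclically. Its vertices alternate between maximal directed segments, and the switch vertices separate them. Since $A(C)=n$, there are exactly $n$ source-switches $p_1,\dots,p_n$ and $n$ sink-switches $q_1,\dots,q_n$. We label them in cyclic order $p_1,q_1,p_2,q_2,\dots,p_n,q_n$. Then $C$ is a union of $2n$ directed paths, namely $p_i\rightsquigarrow q_i$ and $p_{i+1}\rightsquigarrow q_i$ (indices mod $n$). Each of these paths is internally disjoint from the others and contains neither $s$ nor $t$.

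By \ref{Hcycle'}, each $p_i$ has two parallel edges from $s$ and each $q_i$ has two parallel edges to $t$. Let $H$ be the subgraph consisting of the edges of $C$, two parallel edges $s\to p_i$ for each $i$, and two parallel edges $q_i\to t$ for each $i$. Delete every other edge of $G$; each step is a butterfly-minor deletion.

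In $H$, every internal vertex of a directed segment of $C$ (a vertex that is not a switch) has in-degree one and out-degree one. Therefore the edges along each segment are idle and can be contracted one at a time. After contracting them, each segment becomes a single edge: $p_i\to q_i$ or $p_{i+1}\to q_i$. We must check that each contraction remains legitimate, i.e. that the edge stays idle after earlier contractions. This holds because contracting an edge of a path whose middle vertex has degree $(1,1)$ leaves the other middle vertices with degree $(1,1)$.

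The resulting graph has vertices $s,t,p_1,\dots,p_n,q_1,\dots,q_n$. Its edges are the double edges $s\to p_i$, the double edges $q_i\to t$, and the $2n$ edges forming the alternating even cycle $p_1q_1p_2q_2\cdots p_nq_n$. Setting $u_{2i-1}=p_i$ and $u_{2i}=q_i$, this graph is $\Dc_n$. In the figure, $u_1\to u_2$, $u_3\to u_2$, $u_3\to u_4,\dots$, and $u_1\to u_{2n}$ close the cycle; this matches the orientations $p_1\to q_1$, $p_2\to q_1$, $p_2\to q_2,\dots$, and $p_1\to q_n$.

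The main point needing care is distinctness and degeneracy. The switches are distinct vertices of $C$, so the $2n$ vertices $u_j$ are distinct, and $s,t\notin C$ because $C\subset G\setminus\{s,t\}$. For $n\ge2$ the contracted segments produce no parallel edges or loops: two segments sharing both endpoints would force $n=1$. We also check that $H$ is a DAG with unique source $s$ and unique sink $t$. This is inherited from $G$, and every $p_i$ receives edges from $s$ and every $q_i$ sends edges to $t$.
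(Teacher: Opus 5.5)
Your proof is correct and follows the same route as the paper. You keep $C$ together with the two parallel edges $s\to p_i$ and $q_i\to t$ supplied by \ref{Hcycle'}, delete all other edges, and contract each maximal directed segment of $C$ to a single edge to obtain $\Dc_n$. Your additional checks (that idleness persists through successive contractions, that the switches are distinct, and that no parallel edges arise when $n\ge 2$) make explicit what the paper leaves implicit.
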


\begin{proof}
Let $p_1,q_1,p_2,q_2,\ldots,p_n,q_n$ be the source-switch vertices and sink-switch vertices of $C$ appearing in this cyclic order.
By \ref{Hcycle'}, each $p_i$ is adjacent to $s$ by at least two parallel edges, and each $q_i$ is adjacent to $t$ by at least two parallel edges.
Keep these parallel edges and the cycle $C$, delete all other edges, and contract each maximal directed subpath of $C$ between consecutive vertices.
Then the resulting graph is exactly $\Dc_n$.
\end{proof}

\begin{lem}[The $K_5$-case]\label{lem:K5case}
Assume \eqref{eq:deg-at-most-two} and \ref{Hcycle'}.
If $G\cup\{st\}$ contains a subdivision of $K_5$ and does not contain a $K_{3,3}$-subdivision, then $G$ contains $\Fc_5$ or $\Dc_n$  as a butterfly minor.
\end{lem}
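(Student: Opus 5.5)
Since every internal vertex $v$ has at most two neighbours inside $V\setminus\{s,t\}$ by \eqref{eq:deg-at-most-two}, the graph $G\setminus\{s,t\}$ has maximum degree at most two in its underlying simple graph. So its components are paths and cycles, possibly with parallel edges. A $K_5$-subdivision in $G\cup\{st\}$ needs five branch vertices of degree four, and an internal vertex $v$ has degree in the subdivision at most $2$ plus the edges to $s$ and $t$. Hence $v$ can be a branch vertex only if it uses edges to both $s$ and $t$ together with two internal neighbours. My plan is first to use this to pin down the shape of the subdivision, then split according to whether $G\setminus\{s,t\}$ contains a cycle.

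\textbf{Step 1: locating the branch vertices.} Since $s$ and $t$ are only two vertices, at least three of the five branch vertices are internal. Each internal branch vertex uses exactly one edge to $s$, one edge to $t$, and two edges inside $G\setminus\{s,t\}$. Branch paths avoiding $s,t$ stay inside a single path or cycle component of $G\setminus\{s,t\}$. Consider an internal branch vertex $b$, which needs branch paths to four other branch vertices. Only two of these paths can leave through $s$ and $t$, and after passing through $s$ or $t$ a path may continue. So I will check that at most two of the four paths start inside $b$'s component, and I expect this to force $s$ and $t$ themselves to be branch vertices. I would then argue that the three internal branch vertices $x,y,z$ must lie on a common component. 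Each pair among them must be joined by a branch path avoiding $s,t$, because $s$ and $t$ are branch vertices and cannot be passed through. The only way three vertices can be pairwise joined by internally disjoint paths inside a path or cycle component is for that component to be a cycle $C$ through $x,y,z$. Finally, the branch path $st$ can be taken to be the added edge, or an $s$--$t$ path internally avoiding the rest.

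\textbf{Step 2: the cycle case.} If $A(C)\ge 2$, Lemma~\ref{lem:cycle-Dn} directly gives $\Dc_n$ with $n=A(C)$, using \ref{Hcycle'}. So assume $A(C)=1$: $C$ consists of two directed paths from a source-switch $p$ to a sink-switch $q$. By \ref{Hcycle'}, $p$ has two parallel edges from $s$ and $q$ has two parallel edges to $t$. Every vertex of $C$ other than $p,q$ has in- and out-neighbours on $C$ only. So by \eqref{eq:deg-at-most-two} all its further edges go to $s$ or $t$, and a branch vertex among these has edges $s\to v$ and $v\to t$. Among $x,y,z$ at most two lie on one side of $C$, or they include $p$ or $q$.

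I would now keep the cycle $C$, the parallel edges at $p$ and $q$, and the $s$- and $t$-edges at the internal branch vertices, and delete everything else. Each interior vertex of the two directed $p$--$q$ paths that is not a branch vertex then has in- and out-degree one and can be butterfly-contracted. The result should be $\Fc_5$ on vertices $s,p,\,$(a vertex on one side),$\,$(a vertex on the other side or on the same side),$\,q,t$, after relabelling. The parallel pairs $12$ and $45$ of $\Fc_5$ correspond to the doubled edges at $p$ and $q$. A small case check on the positions of the branch vertices on the two sides, perhaps also using the reversal symmetry, should identify exactly the pattern $1\to2\to3\to4\to5$ together with $13,24,35,14,25$.

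\textbf{Main obstacle.} The hard step is Step 1's claim that no $K_5$-subdivision can live across path components, and that one is not forced to take a cycle of $G\setminus\{s,t\}$. It must also rule out configurations where $s$ or $t$ is not a branch vertex, and there I expect that a $K_{3,3}$-subdivision can be extracted instead, contradicting the hypothesis. I would try to settle this by a degree count of the edges entering $s$ and $t$ from the subdivision, combined with the fact that a path component has only two ends and thus cannot host three pairwise-connected branch vertices. The exact matching with $\Fc_5$ in Step 2 is routine but fiddly.
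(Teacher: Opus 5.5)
There is a genuine gap in Step 2, in the case $A(C)=1$. You delete everything except $C$, the doubled edges at $p$ and $q$, and the $s$- and $t$-edges at the branch vertices, and then contract the non-branch interior vertices of the two directed $p$--$q$ paths. What remains has $2+|\{p,q\}\cup\{x,y,z\}|$ vertices. That is five, the size of $\Fc_5$, only when $\{p,q\}\subseteq\{x,y,z\}$. Your own description lists six vertices, $s,p,\cdot,\cdot,q,t$.

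Nothing in your argument forces $p$ and $q$ to be branch vertices. Your claim that at most two of $x,y,z$ lie on one side unless they include $p$ or $q$ is also unjustified: all three can sit in the interior of one directed side.

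The missing idea is exactly where the hypothesis that $G\cup\{st\}$ has no $K_{3,3}$-subdivision enters, and your proof never uses it. Suppose $p$ is an interior vertex of the side of $C$ joining $x$ and $y$. Take one parallel edge $s\to p$ from ($*$), the edges $s\to z$, $x\to t$, $y\to t$, the added edge $st$, and the arcs of $C$. Together they form a $K_{3,3}$-subdivision with bipartition $\{s,x,y\}\sqcup\{p,z,t\}$. The argument for $q$ is dual.

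Hence $p,q\in\{x,y,z\}$, and all three sides of $C$ are directed paths. Acyclicity forces them to be $p\to r\to q$ together with $p\to q$, where $r$ is the third branch vertex. Contracting these sides gives $\Fc_5$ on $s,p,r,q,t$. The doubled edges $12$ and $45$ come from ($*$).

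A secondary point: in Step 1 you leave ``$s$ and $t$ are branch vertices'' as an expectation, and you anticipate needing $K_{3,3}$-freeness there. It is actually a one-line degree count. Each internal branch vertex has four distinct neighbours in the subdivision $H$, and by \eqref{eq:deg-at-most-two} at most two of them are internal. So it is $H$-adjacent to both $s$ and $t$. If $s$ were not a branch vertex, at least four branch vertices would be internal, and $s$ would have degree at least four in $H$, which is impossible for a non-branch vertex.

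So the $K_{3,3}$ hypothesis belongs in Step 2, not Step 1. With these two repairs your outline becomes the paper's proof.
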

\begin{proof}
Let $H$ be a $K_5$-subdivision with branch vertices $v_1,\ldots,v_5$, i.e., $v_1,\ldots,v_5$ are vertices of degree $4$.
Then \eqref{eq:deg-at-most-two} forces $s$ and $t$ to be branch vertices; otherwise some branch vertex of $G\setminus\{s,t\}$ would have degree at least three.

Let, say, $s=v_1$ and $t=v_5$. Then the paths from $s$ to $v_2,v_3,v_4$, and from $v_2,v_3,v_4$ to $t$, must be single edges.
In fact, if, say, the path from $s$ to $v_i$ has length at least two, then $v_i$ would have two neighbors among $v_2,v_3,v_4$ and one additional neighbor on the path from $s$,
giving degree at least three in $G\setminus\{s,t\}$, a contradiction to \eqref{eq:deg-at-most-two}.

Let us consider the subdivision of the triangle on the branch vertices $v_2,v_3,v_4$, which coincides with $H \setminus \{s,t\}$.
If there is a cycle with at least two source-switch vertices, then we can find $\Dc_n$ by Lemma~\ref{lem:cycle-Dn}.
Otherwise, $H \setminus \{s,t\}$ has exactly one source-switch vertex $p$ and exactly one sink-switch vertex $q$. If $p$ or $q$ lies in the interior of one of the three paths $v_2v_3,v_2v_4$ and $v_3v_4$, say, $p$ lies in $v_2v_3$,
then \ref{Hcycle'} supplies two parallel edges from $s$ to $p$.
In this case, we can find a $K_{3,3}$-subdivision by taking a bipartition $\{s,v_2,v_3\} \sqcup \{p,v_4,t\}$, a contradiction to our assumption.
Thus, all of those three paths are directed, and after relabeling their directions are assumed to be from $v_i$ to $v_j$ with $2 \leq i<j \leq 4$.
Then $v_2$ (resp. $v_4$) is a source-switch (resp. sink-switch) vertex. By \ref{Hcycle'}, there are two parallel edges from $s$ to $v_2$ and from $v_4$ to $t$.
Therefore, by contracting the three directed paths, we eventually obtain $\Fc_5$.
\end{proof}

\begin{lem}[The $K_{3,3}$-case]\label{lem:K33case}
Assume \eqref{eq:deg-at-most-two} and \ref{Hcycle'}.
If $G\cup\{st\}$ contains
a subdivision of $K_{3,3}$, then $G$ contains $\Fc_{33}^{(1)}$ or $\Fc_{33}^{(2)}$ or $\Fc_{33}^{(3)}$ or $\Dc_n$ for some $n \geq 2$ or the reversal of one of those graphs.
Moreover, if $G$ has no idle edges, then $G$ contains one of the forbidden minors in
\[\{\Fc_{33}^{(1a)},\Fc_{33}^{(1b)},\Fc_{33}^{(1c)},\Fc_{33}^{(2)},\Fc_{33}^{(3a)},\Fc_{33}^{(3b)},\Fc_{33}^{(3c)}\} \cup \{\Dc_n\}_{n \geq 2}\]
or the reversal of one of those graphs, as a butterfly minor.
\end{lem}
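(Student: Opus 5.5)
Mirroring the $K_5$-case (Lemma~\ref{lem:K5case}), we fix a $K_{3,3}$-subdivision $H$ in $G\cup\{st\}$ with branch classes $X=\{x_1,x_2,x_3\}$ and $Y=\{y_1,y_2,y_3\}$. We then use \eqref{eq:deg-at-most-two} to locate $s$ and $t$. Every branch vertex has degree three in $H$. Any branch vertex other than $s,t$ whose three $H$-neighbours are internal would violate \eqref{eq:deg-at-most-two}. Hence each branch vertex different from $s,t$ has at least one branch path ending at $s$ or $t$ as its first edge, i.e.\ it is adjacent to $s$ or $t$.

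\textbf{Case split.} We split according to the positions of $s$ and $t$: (A) both are branch vertices on opposite sides, so $st$ may be a branch edge of $H$; (B) both are branch vertices on the same side; (C) exactly one, or neither, is a branch vertex. In each case the remaining four or more branch vertices must all be adjacent to $s$ or $t$. This forces most branch paths through $s$ or $t$ to be single edges, exactly as in the $K_5$ argument.

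\textbf{Reducing to the core.} Next we remove $s,t$. What remains of $H$ lies in $G\setminus\{s,t\}$ and, by \eqref{eq:deg-at-most-two}, has maximum degree two, so it is a union of paths and cycles. If such a cycle $C$ has $A(C)\ge2$, Lemma~\ref{lem:cycle-Dn} gives $\Dc_n$. Otherwise every cycle has exactly one source-switch and one sink-switch, and each path segment is a concatenation of directed pieces. Any interior source- or sink-switch $p$ on a branch path gets two parallel edges to $s$ (resp.\ from $t$) by \ref{Hcycle'}, or by the no-idle/degree argument of Lemma~\ref{lem:improve} for acyclic parts. As in the $K_5$ case, we use such switches either to re-route into a new $K_{3,3}$-subdivision with fewer switches or to fix orientations. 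After contracting maximal directed subpaths, which are idle once extraneous edges are deleted, the configuration becomes one of a bounded number of oriented $K_{3,3}-e$ pictures with parallel edges at $s$ and $t$. Enumerating these up to reversal should give exactly $\Fc_{33}^{(1)},\Fc_{33}^{(2)},\Fc_{33}^{(3)}$. Cases (A), (B), (C) correspond roughly to the path-like core of (1), to (2), and to the branched cores of (3).

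\textbf{Moreover part.} For the second statement, assume $G$ has no idle edges. In the minor $\Fc_{33}^{(i)}$ just found, the displayed degree-one edges, e.g.\ $2\to3$, $3\to4$, $4\to5$ in $\Fc_{33}^{(1)}$, are idle in the minor but not in $G$. So the corresponding vertex has a second out-edge (or in-edge) in $G$. As in the last part of Lemma~\ref{lem:high-degree}, we extend that edge to a directed path to $t$ (or from $s$). Acyclicity excludes earlier vertices, and depending on where the path first meets the configuration we contract it, producing a parallel edge or a shortcut to $t$. This yields $\Fc_{33}^{(1a)}$, $\Fc_{33}^{(1b)}$, $\Fc_{33}^{(1c)}$ from $(1)$ and $\Fc_{33}^{(3a)}$, $\Fc_{33}^{(3b)}$, $\Fc_{33}^{(3c)}$ from $(3)$, while $\Fc_{33}^{(2)}$ is kept as is.

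\textbf{Main obstacle.} The hard step is the case enumeration in the reduction step. We must check that every orientation of the branch paths is covered, and that re-routing at interior switches really produces either a $\Dc_n$ or a canonical pattern. If the number of patterns is unwieldy, I would first normalize by reversal and by the symmetry of $K_{3,3}$, and treat the position of $s,t$ (same or opposite side) as the primary split.
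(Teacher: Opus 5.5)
There is a genuine gap, and it lies in the step you yourself flag as the main obstacle. Your case split is wrong, and the enumeration that is supposed to produce $\Fc_{33}^{(1)},\Fc_{33}^{(2)},\Fc_{33}^{(3)}$ is only asserted (``should give exactly''). In fact your own observation, that every branch vertex other than $s,t$ has $s$ or $t$ among its three $H$-neighbours, already rules out (B) and (C):
\begin{itemize}
\item In (B), with $s,t\in X$, the third vertex of $X$ has all three $H$-neighbours on branch paths to $Y$. These neighbours are internal, which contradicts \eqref{eq:deg-at-most-two}.
\item In (C), $s$ and $t$ together carry at most $2+2$ (resp.\ $3+2$) $H$-edges. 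This cannot serve six (resp.\ five) branch vertices. In the tight case, a non-branch $t$ would be the middle vertex of a length-two branch path joining two vertices of the same class, which is impossible.
\end{itemize}
So only (A) occurs. Your assignment ``(B) gives $\Fc_{33}^{(2)}$, (C) gives $\Fc_{33}^{(3)}$'' is false: in all three graphs, $s=1$ and $t=6$ are branch vertices in opposite classes, and $st$ is the missing edge of $K_{3,3}$.

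What actually separates the three graphs is absent from your sketch.
\begin{itemize}
\item In (A), the four branch paths from $s$ and into $t$ are single edges. Hence $H\setminus\{s,t\}$ is a subdivided $4$-cycle $C$, plus possibly the interior of the $s$--$t$ branch path.
\item If $A(C)\ge2$, Lemma~\ref{lem:cycle-Dn} gives $\Dc_n$. Otherwise let $p,q$ be the unique switches of $C$; by \ref{Hcycle'} you have double edges $s\to p$ and $q\to t$.
\item The key move is not re-routing to get ``fewer switches'': $C$ and its switches never change. Instead, you make $p$ and $q$ branch vertices. If $p$ is interior to a cycle branch path whose endpoint $v$ is joined to $s$, keep one edge $s\to p$ and delete $s\to v$. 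This gives a $K_{3,3}$-subdivision with the same $C$ and with $p$ as a branch vertex; do the reverse for $q$.
\item Now all four cycle branch paths are directed and contract by idle contractions. The classification is by the position of $p$ and $q$: adjacent on $C$ with $p$ in $t$'s class gives $\Fc_{33}^{(1)}$; adjacent with $p$ in $s$'s class gives $\Fc_{33}^{(2)}$; opposite gives $\Fc_{33}^{(3)}$, up to reversal.
\item Your appeal to Lemma~\ref{lem:improve} is unavailable in the first part, which has no idle-edge hypothesis. It is also unnecessary, since the only relevant switches lie on $C$.
\end{itemize}

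For the ``moreover'' part, note that $3\to4$ is not idle in $\Fc_{33}^{(1)}$. Rather than extending the extra edge to a path, which would force you to handle paths hitting interior vertices of other contracted branch paths, use \eqref{eq:deg-at-most-two} directly. Vertex $3$ already has two internal neighbours, so its extra in-edge comes from $1$ or is parallel to $2\to3$. Likewise, the extra out-edge of $4$ goes to $5$ or $6$.
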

\begin{proof}
Let $H$ be a $K_{3,3}$-subdivision with branch vertices $v_1,\ldots,v_6$.
Then \eqref{eq:deg-at-most-two} forces $s$ and $t$ to be branch vertices of $H$ by the same argument as in Lemma~\ref{lem:K5case}.
Let, say, $s=v_1$ and $t=v_6$.
If $H\setminus\{s,t\}$ contains no cycle, then $H\setminus\{s,t\}$ is a $K_{1,3}$-subdivision.
Its center has degree three in $G\setminus\{s,t\}$, a contradiction to \eqref{eq:deg-at-most-two}.
Thus, we may assume that $H\setminus\{s,t\}$ contains a cycle $C$, and hence $s$ and $t$ belong to different bipartition classes.
By the same argument as in Lemma~\ref{lem:K5case}, each of the four branch paths from $s$ to any of $\{v_3,v_5\}$ and from $\{v_2,v_4\}$ to $t$ consists of a single edge.

Let us assume that $\{v_1, v_2, v_4\} \sqcup \{v_3,v_5,v_6\}$ is the bipartition of branch vertices of $H$.
Then $v_2$, $v_3$, $v_4$, and $v_5$ occur on $C$ in this cyclic order.
If $A(C)\geq2$, then we can find $\Dc_n$ by Lemma~\ref{lem:cycle-Dn}.
Assume $A(C)=1$, and let $p$ and $q$ be the unique source-switch and sink-switch of $C$, respectively.
By \ref{Hcycle'}, there are two parallel edges from $s$ to $p$ and two parallel edges from $q$ to $t$.

\bigskip

\begin{claim}\label{claimclaim}
    Suppose that $p$ lies in the interior of a branch path of $C$.
Then, using the two parallel edges from $s$ to $p$ guaranteed by \ref{Hcycle'}, one can replace one endpoint of that branch path by $p$ and obtain another $K_{3,3}$-subdivision whose cycle has the same unique sink-switch and whose unique source-switch is a branch vertex.
\end{claim}

\begin{proof}
Suppose that
$p$ lies in the interior of the $v_2$--$v_3$ branch path of $C$. By
\ref{Hcycle'}, there are two parallel edges from $s$ to $p$. Keep one of
these edges and delete the edge $sv_3$. The resulting graph contains a
$K_{3,3}$-subdivision with bipartition
\[
  \{s,v_2,v_4\}\sqcup\{p,v_5,t\}.
\]
Indeed, the three branch paths joining $p$ to $s$, $v_2$, and $v_4$ are,
respectively, the edge $sp$, the subpath from $p$ to $v_2$ of the old
$v_2$--$v_3$ branch path, and the concatenation of the subpath from $p$
to $v_3$ with the old $v_3$--$v_4$ branch path. Thus, the original branch
vertex $v_3$ becomes an internal vertex of the new $p$--$v_4$ branch
path, while all other branch paths remain unchanged.

In fact, the corresponding cycle is the same cycle $C$.
Hence it has the same unique sink-switch $q$, whereas
$p$ is its unique source-switch and is now a branch vertex. The other
three possible branch paths containing $p$ are treated in the same way,
after cyclically relabeling the vertices.
\end{proof}

Applying Claim~\ref{claimclaim} to $p$, and applying its reversed version to $q$, we may assume that both $p$ and $q$ are branch vertices of the $K_{3,3}$-subdivision $H$.
Since $p$ and $q$ are the only switches of $C$ and both are branch vertices, each of the four branch paths forming $C$ is directed.
Delete all edges outside $H$, except for the two parallel edges from $s$ to $p$ and the two parallel edges from $q$ to $t$ guaranteed by \ref{Hcycle'}.
Every internal vertex of such a directed branch path then has exactly one incoming edge and one outgoing edge.
Hence each directed branch path can be contracted successively by contracting idle edges.
We may therefore assume that the underlying undirected graph of $H$ is $K_{3,3}$.

Since $C$ is a $4$-cycle, either $p$ and $q$ are adjacent or they are opposite.
In the adjacent case, there are two possibilities according to which bipartition class contains $p$; in the opposite case, all possibilities are equivalent up to an automorphism of $C$ and reversal.

\bigskip

\noindent
{\bf Case 1.} (The vertices $p$ and $q$ are adjacent on $C$.)
If $p \in \{v_3, v_5\}$ and $q \in \{v_2, v_4\}$, then $G$ contains $\Fc_{33}^{(1)}$ as a butterfly minor.
If $p \in \{v_2, v_4\}$ and $q \in \{v_3, v_5\}$, then $G$ contains $\Fc_{33}^{(2)}$ as a butterfly minor.

\bigskip

\noindent
{\bf Case 2.} (The vertices $p$ and $q$ are not adjacent on $C$.)
Up to an automorphism of $C$ and reversal of all arrows, we may assume that
$p= v_2$ and $q=v_4$.
Then $G$ contains $\Fc_{33}^{(3)}$ as a butterfly minor.

\bigskip

Moreover, by the assumption that there is no idle edge in $G$, we can find minimal versions without idle edges for $\Fc_{33}^{(1)}$ and $ \Fc_{33}^{(3)}$.
Relabel the resulting six vertices as $1,\ldots,6$ in accordance with the notation in Section~\ref{sec:onlyif}.

\begin{itemize}
    \item[(1)] $\Fc_{33}^{(1)}$:
There must exist an edge $x \to 3$ distinct from the displayed edge $2 \to 3$,
and an edge $4 \to y$ distinct from the displayed edge $4 \to 5$ to make $23$ and $45$ non-idle, respectively.
By \eqref{eq:deg-at-most-two}, $x \in \{1,2\}$ and $y \in \{5,6\}$.
Then $(x,y) = (1,6), (2,6), (2,5), (1,5)$
correspond to $\Fc_{33}^{(1a)}$, $\Fc_{33}^{(1b)}$, $\Fc_{33}^{(1c)}$, and $\Fc_{33}^{(1b)}$.

    \item[(2)] $\Fc_{33}^{(3)}$:
There must exist an edge $3 \to x$ distinct from the displayed edge $3 \to 5$ and an edge $4 \to y$
distinct from the displayed edge $4 \to 5$ to make $35$ and $45$ non-idle, respectively.
By \eqref{eq:deg-at-most-two}, $x , y \in \{5,6\}$.
Then $(x,y) = (5,5), (6,5), (6,6), (5,6)$
correspond to $\Fc_{33}^{(3a)}$, $\Fc_{33}^{(3b)}$, $\Fc_{33}^{(3c)}$, and $\Fc_{33}^{(3b)}$.

\end{itemize}
Thus, we obtain the desired conclusion.
\end{proof}

We now complete the proof by contraposition.
Suppose that $G$ is not $st$-planar.
Then $G\cup\{st\}$ contains a subdivision of $K_5$ or $K_{3,3}$.
If some internal vertex has at least three internal neighbors, Lemma~\ref{lem:high-degree} gives one of $\Fc_{33}^{(4)},\Fc_{33}^{(5 a)},\Fc_{33}^{(5 b)}$ or its reversal.
We may therefore assume that \eqref{eq:deg-at-most-two} holds.
By Lemma~\ref{lem:improve}, $G$ satisfies \ref{Hcycle'}.
We can now apply Lemmas~\ref{lem:K5case} and \ref{lem:K33case}, according as $G\cup\{st\}$ contains a subdivision of $K_5$ or $K_{3,3}$.
In either case, we obtain one of the remaining forbidden minors, possibly one of $\Fc_{33}^{(2)},\Fc_{33}^{(3 b)},\Fc_{33}^{(3 c)}$.
By Remark~\ref{rem:K5}, each of the latter three contains $\Fc_5$ as a butterfly minor. Therefore, $G$ contains one of the forbidden DAGs listed in Theorem~\ref{thm:main2} or its reversal.
\qed

\begin{rmk}\label{rmk:forbidden}
We finally point out that the three-good-neighbor condition \ref{Hvertex} is essential for obtaining the list in Section~\ref{sec:onlyif}.
If this condition is omitted, then additional forbidden butterfly minors appear.
For example, the two graphs in Figure~\ref{fig:extra-forbidden} have this property.
These graphs are obtained from $\Dc_2$ by replacing some of the arrows.
\begin{figure}[h]
\centering
\begin{tikzpicture}[scale=0.7, every node/.style={fvertex}]
\node (1) at (-2,0) {1};
\node (2) at (0,1.5) {2};
\node (3) at (0,-1.5) {3};
\node (4) at (2,1.5) {4};
\node (5) at (2,-1.5) {5};
\node (6) at (4,0) {6};

\draw[fedge] (1) to[bend left=15] (2);
\draw[fedge] (1) to[bend right=15] (2);
\draw[fedge] (1) -- (3);
\draw[fedge] (2) -- (3);
\draw[fedge] (2) -- (4);
\draw[fedge] (2) -- (5);
\draw[fedge] (3) -- (4);
\draw[fedge] (3) -- (5);
\draw[fedge] (4) to[bend left=15] (6);
\draw[fedge] (4) to[bend right=15] (6);
\draw[fedge] (5) to[bend left=15] (6);
\draw[fedge] (5) to[bend right=15] (6);
\end{tikzpicture}
\qquad
\begin{tikzpicture}[scale=0.7, every node/.style={fvertex}]
\node (1) at (-2,0) {1};
\node (2) at (0,1.5) {2};
\node (3) at (0,-1.5) {3};
\node (4) at (2,1.5) {4};
\node (5) at (2,-1.5) {5};
\node (6) at (4,0) {6};

\draw[fedge] (1) to[bend left=15] (2);
\draw[fedge] (1) to[bend right=15] (2);
\draw[fedge] (1) -- (3);
\draw[fedge] (2) -- (3);
\draw[fedge] (2) -- (4);
\draw[fedge] (2) -- (5);
\draw[fedge] (3) -- (4);
\draw[fedge] (3) -- (5);
\draw[fedge] (4) -- (5);
\draw[fedge] (4) -- (6);
\draw[fedge] (5) to[bend left=15] (6);
\draw[fedge] (5) to[bend right=15] (6);
\end{tikzpicture}
\caption{Examples of additional forbidden graphs which do not satisfy the three-good-neighbor condition \ref{Hvertex}.}
\label{fig:extra-forbidden}
\end{figure}

Similarly, for each $n\geq 3$, one can construct additional forbidden butterfly minors from $\Dc_n$ which do not arise from any $\Dc_{n'}$ with $n'<n$.
This suggests that, without the three-good-neighbor condition, a complete list of forbidden butterfly minors in our setting would be considerably more complicated.
\end{rmk}

\bigskip
\appendix
\section{An alternative proof for the family $\Dc_n$}\label{sec:app}

In this appendix, we give an alternative proof that $\mathcal F(\Dc_n)$ is not unimodularly equivalent to any order polytope.

\begin{lem}\label{lem:ineq}
Let $n\ge2$ and let $\ab=(a_1,\ldots,a_{2n-1})\in\ZZ_{>0}^{2n-1}$ satisfy $\sum_{k=1}^{2n-1}a_k=4n-1$. Then
\begin{align}
& a_1+a_1a_2+\cdots+a_{2n-2}a_{2n-1}+a_{2n-1}
 +\sum_{k=1}^{2n-1}(2^{a_k}-1)+1
\geq 14n. \label{eq:14n}
\end{align}
The equality holds if and only if
\[\ab=(3,1,3,1,\ldots,3,1,3).\]
\end{lem}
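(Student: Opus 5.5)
The plan is to reduce the inequality to a statement about a single integer sequence with fixed total, and then argue locally along the sequence. Set $a_0=a_{2n}:=1$. Then $a_1=a_0a_1$ and $a_{2n-1}=a_{2n-1}a_{2n}$, so the left-hand side of \eqref{eq:14n} equals
\[
L(\ab)=\sum_{k=0}^{2n-1}a_ka_{k+1}+\sum_{k=1}^{2n-1}2^{a_k}-(2n-1)+1 .
\]

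\textbf{Rewriting the products and exponentials.} First expand $a_ka_{k+1}=(a_k-1)(a_{k+1}-1)+a_k+a_{k+1}-1$. Using $\sum_{k=1}^{2n-1}a_k=4n-1$, the linear parts sum to $2(4n-1)+2-2n=6n$.

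Next add $0=3\bigl(4n-1-\sum_k a_k\bigr)$ to the exponential part. The multiplier $3$ is forced by the equality case: it is the unique $\mu$ with $2^1-\mu=2^3-3\mu$. Writing $Q_k:=(a_k-1)(a_{k+1}-1)\ge0$ and $h(a):=2^a-3a+1$, a direct computation turns \eqref{eq:14n} into
\[
F(\ab):=\sum_{k=0}^{2n-1}Q_k+\sum_{k=1}^{2n-1}h(a_k)\;\ge\;0 .
\]
The relevant values are $h(1)=0$, $h(2)=-1$, $h(3)=0$, and $h(a)\ge a-3$ for all $a\ge2$, with equality exactly at $a=2,3$. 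For $a\ge4$ the gap $h(a)-(a-3)$ is at least $4$ and grows exponentially.

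\textbf{Run decomposition.} Call $a_k$ \emph{big} if $a_k\ge2$. The big entries split into maximal runs $R_1,\dots,R_r$, which are separated by ones. Since $a_0=a_{2n}=1$, every $Q_k$ with a $1$ on either side vanishes, so $F=\sum_j F_{R_j}$ with
\[
F_{R}=\sum_{k\in R}h(a_k)+\sum_{\text{consecutive }k,k+1\in R}Q_k .
\]
Two global facts link the runs to $n$. First, $\sum_{k=1}^{2n-1}(a_k-1)=2n$, and the whole sum comes from the big entries. Second, the $r$ runs must be separated by at least $r-1$ ones inside $2n-1$ slots.

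For a run of length $\ell$, write $\sigma_R=\sum_{k\in R}(a_k-1)$. The key local estimate I would aim for is $F_R\ge \sigma_R-2\ell$, possibly with a slack term. The check is:
\begin{itemize}
\item a single entry gives $h(a)\ge a-3=\sigma_R-2$;
\item in a longer run, each internal pair contributes $Q_k\ge1$, and since $a-3\ge -1$ this pays for the extra $-1$'s coming from twos;
\item a run of twos of length $\ell$ has $F_R=-1$, whereas the crude bound only gives $-\ell$, so the estimate has room.
\end{itemize}
Summing over runs gives $F\ge 2n-2m$, where $m$ is the number of big entries. The inequality then holds when $m\le n$. When $m>n$ the runs are forced to contain adjacent big pairs, and the extra $Q_k\ge1$ must be counted against the deficit. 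This counting is done via the slot constraint $m+\#\{\text{ones}\}=2n-1$ together with $\#\{\text{ones}\}\ge r-1$.

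\textbf{Main obstacle.} The step I expect to be hardest is this bookkeeping: finding the right per-run inequality, of the form $F_R\ge \sigma_R-2\ell+(\ell-1)$ or a close variant, so that after summation the surplus from adjacent big entries exactly cancels $2m-2n$. I would first test the candidate estimate on short runs by hand, e.g. $(2,2,3)$ with $n=2$ gives left-hand side $29\ge28$, and then prove it by induction on the run length, peeling off one end entry at a time.

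\textbf{Equality case.} Equality forces every $h(a_k)$-estimate and every $Q_k$-estimate to be tight. Hence all big entries equal $3$ (twos are ruled out by the slack found above), no two big entries are adjacent, and $m=n$. Together with the sum condition, this forces $\ab=(3,1,3,1,\dots,3,1,3)$.
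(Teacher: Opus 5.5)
Your reformulation is correct. With $a_0=a_{2n}=1$ and the added multiple $3(4n-1-\sum_k a_k)=0$, the inequality becomes $F(\mathbf a)=\sum_k Q_k+\sum_k h(a_k)\ge 0$, and $F$ splits over the runs of entries $\ge 2$.

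The step you call the main obstacle is in fact immediate, and no induction on run length is needed. In a run $R$ of length $\ell$, use $h(a_k)\ge (a_k-1)-2$ for each entry, and $Q_k\ge 1$ for each of the $\ell-1$ adjacent pairs (both factors are $\ge 1$). This gives $F_R\ge\sigma_R-\ell-1$. Summing gives $F\ge 2n-m-r$, and the slot count $m+(r-1)\le 2n-1$ then gives $F\ge 0$.

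\textbf{The equality argument is wrong as written.} Under this estimate, which is the one that actually proves the inequality, a block of $\ell$ twos has $F_R=-1=\sigma_R-\ell-1$, so it is tight. No local slack excludes twos, and tightness of $Q_k\ge 1$ does not forbid adjacent twos. Even for your crude bound, a lone $2$ is tight. Equality forces only three things. Every big entry is $2$ or $3$. Every adjacent big pair is $(2,2)$, so each run is a single $3$ or a block of twos. And $m+r=2n$, so consecutive runs are separated by exactly one $1$ and no $1$ occurs at either end.

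Twos must then be excluded globally. We have $\sum_R\ell_R=m=2n-r$ while $\sum_R\sigma_R=2n$. A single $3$ has $\sigma_R=\ell_R+1$, whereas a block of twos has $\sigma_R=\ell_R$. Hence all $r$ runs are single $3$'s, so $m=r=n$ and $\mathbf a=(3,1,3,\dots,1,3)$.

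With these repairs you have a complete proof by a different route from the paper's. The paper substitutes $b_k=a_k-1$ and reduces to $\sum b_kb_{k+1}+\sum 2^{b_k+1}\ge 10n-2$. It checks $n=2$ over nine triples, and for $n>2$ uses prefix-reversal exchange arguments on a minimizer to force $b_1=2$, $b_2=0$, then inducts. Your multiplier $3$ makes every term nonnegative except $h(2)=-1$, so the whole statement becomes a counting argument. This needs no base-case enumeration and no exchange lemmas. The equality case is read off from which estimates are tight, rather than from a rearranged minimizer.
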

\begin{proof}
Set $b_k=a_k-1\geq 0$.  Then $\sum_{k=1}^{2n-1} b_k=2n$. We first compute
\begin{align*}
a_1+a_1a_2+\cdots+a_{2n-2}a_{2n-1}+a_{2n-1}
&=b_1+b_{2n-1}+2+
\sum_{k=1}^{2n-2}(b_k+1)(b_{k+1}+1)\\
&=2(b_1+\cdots+b_{2n-1})+2n+
\sum_{k=1}^{2n-2}b_k b_{k+1}\\
&=6n+\sum_{k=1}^{2n-2}b_k b_{k+1}.
\end{align*}
Thus, it is enough to show that, for $(b_1,\ldots,b_{2n-1}) \in \ZZ_{\geq 0}^{2n-1}$ with $\sum b_k=2n$,
\[f_n(b_1,\ldots,b_{2n-1}):=\sum_{k=1}^{2n-2}b_k b_{k+1}+\sum_{k=1}^{2n-1}2^{b_k+1} \geq 10n-2,\]
and the equality holds if and only if
\[(b_1,\ldots,b_{2n-1})=(2,0,2,0,\ldots,2,0,2).\]
The equality for this case is immediate.

We prove this inequality by induction on $n$. For $n=2$, a direct check of the fifteen triples satisfying $b_1+b_2+b_3=4$ gives
\[b_1b_2+b_2b_3+2^{b_1+1}+2^{b_2+1}+2^{b_3+1}\geq18,\]
with the equality only if $(b_1,b_2,b_3)=(2,0,2)$.
In fact, since $b_1$ and $b_3$ are symmetric, it suffices to check the following nine representatives of the fifteen triples:
\[(4,0,0),(0,4,0),(3,1,0),(3,0,1),(0,3,1),(2,2,0),(2,0,2),(2,1,1),(1,2,1).\]

Assume $n > 2$ and choose a minimizer. By reversing $b_1,\ldots,b_k$ into $b_k,\ldots,b_1$ if necessary, we may assume that $b_1\geq b_k$ for each $k$. Indeed, for $b_1<b_k$,
\[f_n(b_1,\ldots,b_k,b_{k+1},\ldots)-f_n(b_k,\ldots,b_1,b_{k+1},\ldots)=(b_k-b_1)b_{k+1}\geq0,\]
where $b_{2n}=0$. Hence, $b_1\geq2$; otherwise $2n=\sum_{k=1}^{2n-1} b_k \leq (2n-1)b_1 \leq 2n-1$, a contradiction.
Similarly, after fixing $b_1$, we may assume that $b_2$ is minimal among $b_2,\ldots,b_{2n-1}$. Indeed, for $b_2>b_k$,
\[f_n(b_1,b_2,\ldots,b_k,b_{k+1},\ldots)-f_n(b_1,b_k,\ldots,b_2,b_{k+1},\ldots)=(b_2-b_k)(b_1-b_{k+1})\geq0.\]
Hence, $b_2\leq1$; otherwise $2n=\sum_{k=1}^{2n-1} b_k \geq (2n-1)b_2 \geq 4n-2$, a contradiction.
If $b_2=1$, then $b_k \ge 1$ for all $k \ge 2$.
Since $b_1 \ge 2$ and $\sum_{k=1}^{2n-1}b_k=2n$, we have $(b_1,b_2,\ldots,b_{2n-1})=(2,1,\ldots,1)$ and then
\[f_n(b_1,\dots,b_{2n-1})=10n-1 > 10n -2 =f_n(2,0,2,0,\dots,2,0,2),\]
a contradiction.
Thus we have
$b_2=0$.
If $b_1 \ge 3$, then
\[
f_n(b_1,0,b_3,\ldots,b_{2n-1})-f_n(b_1-1,1,b_3,\ldots,b_{2n-1})=2^{b_1}-b_1-b_3-1
\ge 2^{b_1}-2b_1-1 >0,
\]
a contradiction.
Hence $b_1=2$.

With $b_1=2$ and $b_2=0$, we have
\[f_n(b_1,\ldots,b_{2n-1})=f_n(2,0,b_3,\ldots,b_{2n-1})=10+f_{n-1}(b_3,\ldots,b_{2n-1}),\]
and $b_3+\cdots +b_{2n-1} = 2 (n-1)$.
By the induction hypothesis,
\[
f_{n-1}(b_3,\ldots,b_{2n-1}) \ge 10(n-1)-2
\]
with the equality if and only if \[(b_3,\ldots,b_{2n-1})=(2,0,2,0,\ldots,2,0,2).\]
Thus we have
\[f_n(b_1,\ldots,b_{2n-1})=
10+f_{n-1}(b_3,\ldots,b_{2n-1})
\ge 10+ 10(n-1)-2=10n-2
,\]
and
with the equality if and only if \[(b_1,\ldots,b_{2n-1})=(2,0,2,0,\ldots,2,0,2),\]
as desired.

\end{proof}

Let $A_m$ denote the antichain with $m$ elements.

\begin{lem}\label{lem:Dn-order-candidate}
If $\Fc(\Dc_n)$ is unimodularly equivalent to the order polytope $\Oc(P)$
of a poset $P$, then $P$ is isomorphic to the poset
\[P_n=A_3\oplu A_1\oplu A_3\oplu A_1\oplu\cdots\oplu A_1\oplu A_3,\]
where $P_n$ has $n$ copies of $A_3$ and $n-1$ copies of $A_1$.
\end{lem}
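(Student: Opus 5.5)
The plan is to compute four invariants of $\Fc(\Dc_n)$ that any unimodularly equivalent order polytope $\Oc(P)$ must share: dimension, number of facets, number of vertices, and codegree. I will then show that these numbers force $P\cong P_n$, with Lemma~\ref{lem:ineq} as the key counting input.

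\textbf{Step 1: the invariants of $\Fc(\Dc_n)$.} The graph $\Dc_n$ has $2n+2$ vertices. Its edges are two parallel edges $s\to u_{2i-1}$ for each $i$, two parallel edges $u_{2i}\to t$ for each $i$, and the $2n$ cycle edges, for a total of $6n$.
\begin{itemize}
\item Dimension: $\dim\Fc(\Dc_n)=6n-(2n+2)+1=4n-1$, so $|P|=4n-1$.
\item Facets: $\Dc_n$ has no idle edges, so by Lemma~\ref{lem:new-butterfly} it has $6n$ facets. Hence the Hasse diagram of $\widehat P$ has $6n$ edges.
\item Vertices: an $s$-to-$t$ path is $s\to u_{2i-1}\to u_{2j}\to t$. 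There are $2$ choices of first edge, $2$ even out-neighbours of $u_{2i-1}$ on the cycle, and $2$ choices of last edge. Over the $n$ odd vertices this gives $8n$ vertices, so $P$ has exactly $8n$ antichains, counting the empty one.
\item Codegree: an interior lattice point of $k\Fc(\Dc_n)$ is an integral flow of value $k$ that is at least $1$ on every edge. The $2n$ edges out of $s$ force $k\ge 2n$. The flow equal to $1$ on every edge is feasible, since each $u_{2i-1}$ has inflow $2$ and outflow $2$, and similarly for each $u_{2j}$. Hence $\codeg=2n$.
\end{itemize}
Codegree is preserved by unimodular equivalence, so $\ell(P)=2n-2$. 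Thus the longest chain of $P$ has exactly $2n-1$ elements.

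\textbf{Step 2: a level decomposition of $P$.} For $x\in P$, let its height be the number of elements in a longest chain of $P$ ending at $x$. Let $a_k\ge1$ be the number of elements of height $k$, for $k=1,\dots,2n-1$. Then each $a_k$ is positive, $\sum_k a_k=4n-1$, and every level is an antichain.

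Now take $x$ of height $k$ and $y$ of height $k+1$. If $x<y$, then $y$ covers $x$, because any $z$ with $x<z<y$ would need height strictly between $k$ and $k+1$. So each of the $a_ka_{k+1}$ adjacent-level pairs is either a Hasse edge or an incomparable pair.

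\textbf{Step 3: a lower bound on edges plus antichains.} I will bound $(\#\text{Hasse edges of }\widehat P)+(\#\text{antichains of }P)$ from below by the left side of \eqref{eq:14n}.
\begin{itemize}
\item Edges: the Hasse diagram of $\widehat P$ contains at least the $a_1$ edges $\widehat0\lessdot x$ for $x$ of height $1$. It also contains at least $a_{2n-1}$ edges into $\widehat1$, since top-level elements are maximal.
\item Adjacent-level pairs: these contribute $a_1a_2+\cdots+a_{2n-2}a_{2n-1}$ objects in total. Each such pair is either a cover edge or a $2$-element antichain with its two elements in adjacent levels.
\item Within-level antichains: the nonempty subsets of single levels give $\sum_k(2^{a_k}-1)$ antichains, and the empty antichain gives $1$ more. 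These are distinct from the adjacent-level $2$-antichains above, whose elements lie in different levels.
\end{itemize}
So $6n+8n=14n$ is at least the left side of \eqref{eq:14n}. Lemma~\ref{lem:ineq} then forces equality throughout and gives $\ab=(3,1,3,1,\dots,1,3)$.

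\textbf{Step 4: conclude $P\cong P_n$.} Equality in the count means no adjacent-level pair is incomparable. So every element of level $k$ lies below every element of level $k+1$, and by transitivity $P$ is exactly the ordinal sum of its levels. That is $A_3\oplu A_1\oplu\cdots\oplu A_1\oplu A_3=P_n$.

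The step I expect to need the most care is Step 3. One must check that the objects counted are pairwise distinct and are genuinely Hasse edges or antichains; the covering argument between adjacent height levels is what makes this work. The codegree computation also needs a check that the all-ones flow is valid at each internal vertex of $\Dc_n$.
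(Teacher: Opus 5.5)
Steps 1--3 are correct, but the first sentence of Step~4 does not follow from your count.

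\textbf{How your route differs from the paper's.} The paper first uses that $\Fc(\Dc_n)$ is Gorenstein, together with Hibi's criterion, to know that $P$ is graded. Then the Hasse diagram of $\widehat P$ sits inside that of $A_{a_1}\oplu\cdots\oplu A_{a_{2n-1}}$. Your height levels exist for every poset, and your one-sided count absorbs any extra Hasse edges, so you need no Gorenstein input.

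\textbf{The gap in Step~4.} In your bookkeeping, each adjacent-level pair contributes exactly $1$ to the total, whether it is a cover or an incomparable pair. So equality in the total says nothing directly about incomparable adjacent pairs. For example, take $n=2$ with levels $\{x_1,x_2,x_3\}$, $\{y\}$, $\{w_1,w_2,w_3\}$ and relations $x_2,x_3<y<w_j$, $x_1<w_j$, $x_1\not<y$. Your antichain inequality is then an equality. This poset is excluded (its total is $31>28$) only because the three covers $x_1\lessdot w_j$ join non-adjacent levels, and these were not among the edges you counted.

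\textbf{How to close it.} Equality in the total forces both of your inequalities to be equalities. In particular:
(a) every maximal element of $P$ lies in the top level;
(b) every cover relation of $P$ joins adjacent levels.

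Write $L_k$ for the set of elements of height $k$. In the shape $(3,1,3,\dots,1,3)$, every two adjacent levels include a singleton, so there are two cases.
\begin{enumerate}
\item If $L_k=\{w\}$, take $y\in L_{k+1}$. The penultimate element of a longest chain ending at $y$ has height exactly $k$, so it is $w$, and hence $w<y$.
\item If $L_{k+1}=\{y\}$, take $x\in L_k$. It is not in the top level, so it is not maximal by (a). Hence it is covered by some $z$, and $z\in L_{k+1}$ by (b), so $z=y$.
\end{enumerate}
Thus every element of $L_k$ lies below every element of $L_{k+1}$. Only now does transitivity give $P\cong P_n$.

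This plays the role of the paper's closing observation that a proper Hasse subgraph of that of $P_n$ would not be graded. With it, your argument is complete.
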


\begin{proof}
We compute the dimension $d$, the number $N$ of vertices, the number $M$ of facets, and the codegree $r$ for $\Fc(\Dc_n)$. Note that $|V|=2n+2$ and $|E|=6n$:
\[ d=6n-(2n+2)+1=4n-1; \;\; N=2n \cdot 2 \cdot 2=8n; \;\; M=6n; \;\; r=2n. \]
Since $r=2n$, the maximum number of cover relations in a chain of $P$ should be $2n-2$.
Moreover, $\Fc(\Dc_n)$ is Gorenstein, since every vertex $v \in V \setminus \{s,t\}$ of $\Dc_n$ has the same number of
incoming and outgoing edges.
See \cite[Proposition~6.15]{vonBell}.
It is known that $\Oc(P)$ is Gorenstein if and only if $P$ is graded (\cite{Hibi}).
Thus, $P$ has $2n-1$ nonempty rank levels $P_1, \dots, P_{2n-1}$.
Let $a_k = |P_k|$. Then
\[\sum_{k=1}^{2n-1}a_k=|P|=4n-1.\]
Since every cover relation of $P$ joins two consecutive rank levels, the Hasse diagram of $P$ is a spanning subgraph of that of
\[P(a_1,\ldots,a_{2n-1}):=A_{a_1} \oplus \cdots \oplus A_{a_{2n-1}}.\]
Regarding the order polytope of $P(a_1,\ldots,a_{2n-1})$, we observe the following:
\begin{itemize}
    \item The number of facets of $\Oc(P(a_1,\ldots,a_{2n-1}))$ is the number of cover relations of $P(a_1,\ldots,a_{2n-1}) \cup \{\hat{0},\hat{1}\}$, which is $a_1+a_1a_2+\cdots+a_{2n-2}a_{2n-1}+a_{2n-1}$;
    \item The number of vertices of $\Oc(P(a_1,\ldots,a_{2n-1}))$ is the number of antichains, which is $(2^{a_1}-1)+\cdots+(2^{a_{2n-1}}-1)+1$.
\end{itemize}
Given a poset $Q$, let $\mathcal{M}(Q) = H(Q) + {\rm Ant}(Q)$,
where $H(Q)$ is the number of cover relations in $Q \cup \{\hat{0}, \hat{1}\}$ and ${\rm Ant}(Q)$ is the number of antichains in $Q$.
Since $\mathcal O(P)$ and $\mathcal F(\Dc_n)$ have the same numbers of facets and vertices, we have \[ \mathcal{M}(P)=H(P)+\operatorname{Ant}(P)=6n+8n=14n. \]
By Lemma~\ref{lem:ineq},
\[
\mathcal{M}(P(a_1,\ldots,a_{2n-1})) \geq 14n
\] with the equality if and only if $(a_1,\ldots,a_{2n-1})=(3,1,3,1,\ldots,3,1,3)$.
If $e=ij$ is an edge of the Hasse diagram of $P(a_1,\ldots,a_{2n-1})$ and not an edge of the Hasse diagram of $P$, then $\{i,j\}$ is an antichain of $P$ and not an antichain of $P(a_1,\ldots,a_{2n-1})$.
Thus, if $H(P(a_1,\ldots,a_{2n-1})) - H(P) =\alpha$, then
\begin{eqnarray*}
\mathcal{M}(P(a_1,\ldots,a_{2n-1}))
&=&H(P(a_1,\ldots,a_{2n-1})) + {\rm Ant}(P(a_1,\ldots,a_{2n-1}))\\
&\le& (H(P) + \alpha) +  ({\rm Ant}(P) -\alpha) \\
&=& \mathcal{M}(P).
\end{eqnarray*}
Hence, if $(a_1,\ldots,a_{2n-1}) \neq (3,1,3,1,\ldots,3,1,3)$, then $\mathcal{M}(P) > 14n$,
contradicting $\mathcal{M}(P)=14n$.
On the other hand,
if the Hasse diagram of $P$ is a proper subgraph of the Hasse diagram of
$P(3,1,3,1,\ldots,3,1,3)$,
then $P$ is not graded,
implying that $\Oc(P)$ is not Gorenstein.
Indeed, every rank level of size three is adjacent to a singleton rank level. If one of the corresponding cover relations is deleted,
then the affected element belongs to a maximal chain strictly shorter than a maximal chain passing through one of the remaining elements. Hence the resulting poset is not graded.

Therefore, the only possible poset is $P(3,1,3,1,\ldots,3,1,3)$ itself.
\end{proof}

\begin{prop}\label{prop:Dn-not-order}
For any $n\geq 2$, the flow polytope $\Fc(\Dc_n)$ is not unimodularly equivalent to any order polytope.
\end{prop}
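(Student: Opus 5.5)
By Lemma~\ref{lem:Dn-order-candidate}, any order polytope unimodularly equivalent to $\Fc(\Dc_n)$ must be $\Oc(P_n)$ with $P_n=A_3\oplus A_1\oplus\cdots\oplus A_1\oplus A_3$. So it suffices to find one unimodular (or even affine) invariant that distinguishes $\Fc(\Dc_n)$ from $\Oc(P_n)$. The cheapest choice is the combinatorial type: affine equivalence preserves face lattices, so I will compare the vertex--facet incidences, starting with how many facets contain each vertex.

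On the order side, the facets of $\Oc(P_n)$ correspond to the $6n$ cover relations of $\widehat{P_n}$. The $A_1$ elements $c_1,\dots,c_{n-1}$ are cut vertices of the Hasse diagram, so $\Oc(P_n)$ is essentially a free sum/join structure of pieces of the form $\Oc(A_3)$, the $3$-cube, glued through the $c_i$. The vertex $\mathbf 0$ (the empty filter) satisfies $x_p=0$ for all $p$, and hence $x_q-x_p=0$ for every internal cover. So it lies on every facet except the three of type $p\lessdot\widehat 1$, and therefore on $6n-3$ facets. Likewise $\mathbf 1$ lies on $6n-3$ facets.

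On the flow side, a vertex is an $s$--$t$ path $P$, and it lies on the facet $x_e=0$ exactly when $e\notin P$. Every $s$--$t$ path in $\Dc_n$ has the form $s\to u_{2i-1}\to u_{2j}\to t$, so it has exactly $3$ edges. Hence every vertex of $\Fc(\Dc_n)$ lies on exactly $6n-3$ facets, and this count does not distinguish the two polytopes. I need a finer invariant.

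The next invariant I would try is the number of facets containing a vertex, counted with edges in the graph of the polytope. Alternatively, and more robustly, I would use the fact that $\Fc(\Dc_n)$ is simple at no vertex while some vertex of $\Oc(P_n)$ has degree exactly $d=4n-1$. I now think the cleanest route is instead to use Theorem~\ref{thm:main1}: $\Dc_n$ has no idle edges and is not $st$-planar. Adding $st$ produces a $K_{3,3}$ subdivision, built from the cycle with $s$ adjacent to the source-switches and $t$ adjacent to the sink-switches. So (iii)$\Rightarrow$(i) already gives the result, and the appendix aims at an independent proof.

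For the independent proof, my concrete plan is to compare the facet-normal matroids using Lemma~\ref{lem:facet-matroid-affine-invariant} together with the explicit Hasse diagram. $M(K(P_n))$ is graphic with $v_0$ adjacent to the $3+3$ elements of the two end $A_3$'s. Meanwhile, by Proposition~\ref{prop:flow-facet-matroid}, $M_{\mathrm{fac}}(\Fc(\Dc_n))\cong M(\Dc_n)^*$. I would count $3$-element circuits on each side: in $M(\Dc_n)^*$ these are $3$-element bonds. The degree-$3$ vertices $u_i$ each give a bond of size $3$, which yields $2n$ of them; I must also rule out any others, since removing any $3$ edges that are not a vertex star leaves the graph connected. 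On the order side, the triangles of $K(P_n)$ must be counted. The hard step is this bookkeeping, and I expect the triangle count in $K(P_n)$ to be $0$, because the Hasse diagram of the graded poset is bipartite between consecutive ranks, while triangles through $v_0$ would need a cover between two rank-end elements. That gives $2n\neq 0$, a contradiction.
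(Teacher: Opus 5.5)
Your argument via Theorem~\ref{thm:main1} is correct and complete. $\Dc_n$ has a unique source and a unique sink and no idle edges, so $\widetilde{\Dc_n}=\Dc_n$. The vertices $u_1,u_3$ (joined to $s$) and $u_2,u_4$ (joined to $t$) alternate on the cycle $C$, so $C\cup\{su_1,su_3,tu_2,tu_4,st\}$ is a $K_{3,3}$-subdivision with classes $\{s,u_2,u_4\}$ and $\{t,u_1,u_3\}$. The proof of (iii)$\Rightarrow$(i) does not use the appendix, so there is no circularity.

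This is a genuinely different route from the paper's. The paper already implies this in Section~\ref{sec:onlyif}, and the appendix deliberately avoids the matroid/Whitney machinery. There, Lemma~\ref{lem:Dn-order-candidate} pins down the only candidate poset $P_n$ from the $f$-vector, the codegree and the Gorenstein property. The proof then compares normalized volumes: $(1+\sqrt2)^{2n}+(1-\sqrt2)^{2n}<6^n$. Your route gives the stronger conclusion (no affine equivalence) at no extra cost. The paper's route is a self-contained, quantitative certificate built only from unimodular invariants.

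The ``independent'' matroid plan you end with, however, fails as written. In $\Dc_n$ every $u_i$ has degree $4$, not $3$: two parallel edges to $s$ or to $t$, plus two cycle edges. In fact every vertex of $\Dc_n$ has even degree ($s$ and $t$ have degree $2n$), so every edge cut has even size. Hence $M(\Dc_n)^*$ has no $3$-element circuits at all. Since $K(P_n)$ is bipartite, your comparison is $0=0$, not $2n\neq 0$.

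You also cannot fall back on non-graphicness here. $\Dc_n$ itself is planar ($s$ inside the cycle, $t$ outside), so $M(\Dc_n)^*$ is graphic; this is exactly why the proof of (iii)$\Rightarrow$(i) adjoins $st$.

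The plan can be repaired by counting $2$-element cocircuits instead:
\begin{itemize}
\item In $M(\Dc_n)^*$, these are the $2$-element circuits of $M(\Dc_n)$, namely the $2n$ parallel pairs.
\item In $M(K(P_n))$, they are the $2$-edge bonds of $K(P_n)$. This graph is a ring of $n$ thetas, each made of three paths of length $2$, so these bonds are exactly the $3n$ edge pairs at the degree-$2$ vertices coming from the $A_3$ levels.
\end{itemize}
Since $2n\neq 3n$, Lemma~\ref{lem:facet-matroid-affine-invariant} together with Lemma~\ref{lem:Dn-order-candidate} finishes the proof.

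Finally, your aside that some vertex of $\Oc(P_n)$ has degree $4n-1$ (i.e.\ is simple) is false. Every filter of $P_n$ crosses exactly three cover relations of $\widehat{P_n}$. So every vertex of $\Oc(P_n)$ lies on $6n-3>4n-1$ facets, just as every vertex of $\Fc(\Dc_n)$ does.
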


\begin{proof}
By Lemma~\ref{lem:Dn-order-candidate}, it remains to compare the normalized volume $V_n$ of $\Fc(\Dc_n)$ with the normalized volume $V_n'$ of $\Oc(P_n)$ where $P_n$ is the only possible candidate $A_3 \oplus A_1 \oplus \cdots \oplus A_3$.

We first compute the normalized volume of $\Fc(\Dc_n)$.
By \cite[Theorem~1.7]{BBDH}, the normalized volume of the flow polytope of the extension of a bipartite graph is equal to the number of matchings of the associated almost-degree-whiskered graph.
In the present case, the underlying bipartite graph is the cycle $C_{2n}$, and attaching one leaf to each vertex of $C_{2n}$ produces the $2n$-sunlet graph.
Hence, the normalized volume of $\Fc(\Dc_n)$ coincides with the number of matchings in the $2n$-sunlet graph.
Let $M_m$ denote the number of matchings in the $m$-sunlet graph.
From \cite[Lemma 2.2]{DNP}, we have $M_m =L_m(2)$, where $L_m(x)$ is the Lucas polynomial.
Since $L_0(2) = L_1(2)= 2$ and $L_m(2) = 2L_{m-1}(2) + L_{m-2}(2)$, it follows that $L_m(2)$ coincides with the Pell-Lucas number
\[
(1+\sqrt{2})^m+(1-\sqrt{2})^m.\]
Taking $m=2n$, we obtain \[V_n=(1+\sqrt{2})^{2n}+(1-\sqrt{2})^{2n}.\]

Next, we compute the normalized volume of the order polytope of
\[P_n=A_3\oplu A_1\oplu A_3\oplu A_1\oplu\cdots\oplu A_1\oplu A_3,\]
where there are $n$ copies of $A_3$ and $n-1$ copies of $A_1$.
By Stanley's theorem on order polytopes, the normalized volume of $\Oc(P_n)$ is the number $e(P_n)$ of linear extensions of $P_n$; see \cite[Theorem~4.1]{Stanley}.
Since $P_n$ is an ordinal sum, every element of an earlier summand must appear before every element of a later summand
in any linear extension.
Since $e(A_1) =1$ and $e(A_3)=3!=6$, we have
\[V_n' =e(P_n)=e(A_3)e(A_1)e(A_3)e(A_1) \cdots e(A_1)e(A_3) =6^n.\]

We now compare these two volumes. Let $\alpha = (1+\sqrt{2})^2= 3+2\sqrt{2}$ and $\beta = (1-\sqrt{2})^2 =  3-2\sqrt{2}$.
Since both $\alpha$ and $\beta$ are positive,
\[V_n = \alpha^n + \beta^n< (\alpha + \beta)^n=6^n=V_n'\] for $n \ge 2$.
Thus, the normalized volumes are different, and the two polytopes cannot be unimodularly equivalent.
\end{proof}

\end{document}